\documentclass[a4paper,12pt]{article}
\usepackage{array,amsfonts,amsmath,graphicx,mathrsfs,amssymb,amsthm,latexsym}
\usepackage[latin1]{inputenc}
\usepackage{color}

\textheight 22cm
\textwidth 14cm
 \voffset -1cm

\newtheorem{thm}{Theorem}[section]

\newtheorem{lemma}[thm]{Lemma}

\def \cF {{\cal F}}
\def \cG {{\cal G}}
\def \cH {{\cal H}}

\begin{document}
\title{Uniformly resolvable decompositions of $K_v-I$ into $5$-stars }

\author
 {Jehyun Lee and  Melissa Keranen\\
\small Department of Mathematical Sciences \\
\small Michigan Technological University\\
}

\maketitle

\vspace{5 mm}

\begin{abstract}
We consider the existence problem of uniformly resolvable decompositions of $K_v$ into subgraphs such that each resolution class contains only blocks isomorphic to the same graph. We give a complete solution for the case in which one resolution class is $K_2$ and the rest are $K_{1,5}$.

\end{abstract}

\section{Introduction}\label{intro}



Let $G$ be a graph with vertex set $V(G)$ and edge set ${E }(G)$. An $\cH$-$decomposition$ of the graph $G$ is a collection of edge disjoint subgraphs $\cH = \{H_1,H_2, \dots, H_a \}$ such that every edge of $G$ appears in exactly one graph $H_i \in \cH$. The subgraphs, $H_i \in \cH$, are called blocks. An $\cH$-$decomposition$ is called resolvable if the blocks in $\cH$ can be partitioned into classes(or factors) $F_j$, such that every vertex of $G$ appears in exactly one block of each $F_j$. A resolvable $\cH$-$decomposition$ is also referred to as an $\cH$-$factorization$ of $G$, whose classes are referred to as $\cH$-factors. We say a class(or factor) $F$, is uniform if each $H_i \in F$ is isomorphic to a given $H$. An $\cH$-decomposition of $G$ is uniformly resolvable if its blocks can be partitioned into uniform classes. If $\cH=\{K_2 \}$, then a $K_2$-factorization of $G$ is known as a 1-{\em factorization} and its factors are called 1-{\em factors}. It is well known that a 1-factorization of $K_v$  exists if and only if $v$ is even (\cite{Lu}).

Recently, the existence problem for uniformly resolvable $\cH$-decompositions of $K_v$ have been studied, and many results have been obtained. In particular; results have been given when $\cH$ is a set of two complete graphs of order  at most five in \cite{DLD, R, SG, WG};
when
$\cH$ is a set of two or three paths on two, three or four vertices in \cite{GM1,GM2, LMT}; for
$\cH =\{P_3, K_3+e\}$ in \cite{GM}; for $\cH =\{K_3, K_{1,3}\}$ in \cite{KMT}; for $\cH =\{C_4, P_{3}\}$ in \cite{M}; for $\cH =\{K_3, P_{3}\}$ in \cite{MT}; for $\cH =\{K_2, K_{1,3}\}$ in \cite{CC}.

If $\cH=\{H_1,H_2 \}$, then there are many types of uniformly resolvable $\cH$-decompositions, depending on how many factors contain copies of $H_1$ and how many factors contain copies of $H_2$. We let $(H_1,H_2)$-$URD(v;r,s)$ denote a uniformly resolvable decomposition of $K_v$ into $r$ classes containing only copies of $H_1$ and $s$ classes containing only copies of $H_2$. We will consider this problem when $H_1 = K_2$ and $H_2 = K_{1,n}$. While the general case $(K_2,K_{1,n})$-$URD(v;r,s)$ is still open and in progression, we have observed that the standard methods used for most cases of $(r,s)$ are not applicable to solve the cases when number of $1$-factors is small. Thus, we studied these cases separately. With regard to the extremal cases, we have the following results.
\begin{itemize}
    \item A $(K_2,K_{1,n})$-$URD(v;r,0)$ exists if and only if $v$ is even.
    \item If $n$ is even, a $(K_2,K_{1,n})$-$URD(v;0,s)$ exists if and only if $v \equiv 1 \pmod{2n}$ and $v \equiv 0 \pmod{n+1}$ $(\cite{CD}, \cite{WG})$.
    \item If $n$ is odd, there exists no $(K_2,K_{1,n})$-$URD(v;0,s)$ $(\cite{CD}, \cite{WG})$.
\end{itemize}

The existence problem for a $(K_2,K_{1,3})$-$URD(v;r,s)$ for any admissible parameters $r,v,$ and $s$ has been solved in \cite{CC}. In this paper, we completely solve the existence problem of a $(K_2,K_{1,5})$-URD$(v;$ $1,\frac{v-2}{2})$ by proving the following  result.\\

\noindent \textbf{Main Theorem.} 
{\em There exists a decomposition of $K_{v}-I$ into $5$-star factors if and only if $v \equiv 12 \pmod{30}$.}

\section{Necessary Conditions}

\begin{lemma} 
\label{ness}
If a $(K_2,K_{1,5})$-$URD(v;1,s)$ exists, then $v \equiv 12 \pmod{30}$. 
\end{lemma}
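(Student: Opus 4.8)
The plan is to read off two independent divisibility conditions from the structure of the decomposition and then combine them arithmetically. Since each $K_{1,5}$-factor must cover all $v$ vertices by vertex-disjoint copies of $K_{1,5}$, and each such copy occupies exactly six vertices, a single $K_{1,5}$-factor partitions the vertex set into blocks of size six; hence one can exist only if $6 \mid v$. In particular $v$ is even, so the lone $K_2$-factor required by the parameter $r=1$ is not obstructed on its own (consistent with the classical fact that a $1$-factorization of $K_v$ needs $v$ even).

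Next I would carry out a global edge count on $K_v$. The decomposition consists of one $K_2$-factor, which is a perfect matching and so contributes $v/2$ edges, together with $s$ copies of $K_{1,5}$-factors, each consisting of $v/6$ stars carrying five edges apiece, i.e. $5v/6$ edges per factor. Equating the total with the number of edges of $K_v$ gives $\frac{v}{2} + s\cdot\frac{5v}{6} = \frac{v(v-1)}{2}$, and solving for $s$ yields $s = \frac{3(v-2)}{5}$. For $s$ to be a nonnegative integer we need $5 \mid 3(v-2)$, and since $\gcd(3,5)=1$ this forces $v \equiv 2 \pmod 5$.

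Finally I would combine $v \equiv 0 \pmod 6$ with $v \equiv 2 \pmod 5$ via the Chinese Remainder Theorem. These moduli are coprime, and the unique residue modulo $30$ satisfying both congruences is $v \equiv 12 \pmod{30}$, which is exactly the asserted conclusion. I do not expect any serious obstacle here, as the argument is purely arithmetic: the only points demanding care are verifying that each of the two congruences is genuinely forced (the first by the packing structure of one factor, the second by the edge balance) and checking that no further hidden constraint arises. The edge-counting identity for $s$ is the crux of the computation, and everything else is routine.
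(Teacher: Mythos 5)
Your proposal is correct and follows essentially the same route as the paper: both derive $v \equiv 0 \pmod 6$ from the vertex-partition structure of a $5$-star factor, both derive $v \equiv 2 \pmod 5$ from the edge count (the paper divides the edge count of $K_v-I$ by the $\frac{5v}{6}$ edges per star factor, which is the same computation as your solving for $s$), and both finish with the Chinese Remainder Theorem. No meaningful differences to report.
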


\begin{proof}
Let $K_v$ be the complete graph on $v$ vertices. Because a $(K_2,K_{1,5})$-$URD(K_v;1,s)$ contains exactly one 1-factor, $v$ must be divisible by $2$. Also, if $s \geq 1$, $v$ must be divisible by $6$ and the total number of edges in $K_v-I$ must be divisible by the total number of edges in one 5-star factor. Thus, $\frac{v(v-1)}{2}-\frac{v}{2}$ must be divisible by $\frac{5v}{6}$. If we divide $\frac{v(v-2)}{2}$ by $\frac{5v}{6}$, we obtain $\frac{3(v-2)}{5}$. Since $5$ cannot divide $3$, $(v-2)$ must be divisible by $5$. Therefore, we obtain the two congruences,

\begin{align}
    v \equiv& \textrm{ } 0 \pmod{6}\\
    v \equiv& \textrm{ } 2 \pmod{5}.
\end{align}

By the Chinese remainder theorem, we have $v \equiv 12 \pmod{30}$.
\end{proof}

\section{Almost $5$-star Factors}

If $S$ is a set of $v$ vertices, such that $v \equiv t \pmod{n+1}$, then we will say the graph $G$ is {\em almost spanning} if it spans all but $t$ vertices. Define an {\em almost $n$-star factor} on a set of vertices $S$ to be an almost spanning graph on $S$ in which each connected component of $S-t$ is an $n$-star, and the $t$ isolated vertices form one $(t-1)$-star, which we will refer to as a  {\em little star}.

Let $G$ be a graph with $g$ vertices. The {\em difference} of the edge $e=\{u,v\}$ in $G$ with $u<v$, is $D(e)=min\{v-u,g-(v-u)\}$. If the difference of an edge $e$ is defined by $(v-u)$, then we will refer to this edge as a {\em forward edge}, and its difference will be called a {\em forward difference}. If the difference of an edge $e$ is defined by $g-(v-u)$, then we will refer to this edge as a {\em wrap-around edge}, and its difference will be called a {\em wrap-around difference}.

Let $F$ be an almost 5-star factor.  Label the edges in $F$ by the differences they cover. Suppose each forward difference occurs no more than twice among the stars. If any difference $d$ appears twice, then use the labels $d$(pure) and $d'$(prime) to distinguish them. Also, if $\{u,v\}$ is an edge with a prime difference, and $u<v$, then we will denote it by $\{u,v'\}$. If a star consists of edges whose differences all have a pure label, we will refer to this star as a {\em pure star}. If it consists of edges whose differences all have a prime label, then it will be referred to as a {\em prime star}. We will refer to the corresponding differences as {\em pure differences} or {\em prime differences}, and similarly; we will refer to the corresponding edges as {\em pure edges} or {\em prime edges}. If a star contains a mixture of pure edges and prime edges, then it will be referred to as a {\em mixed star}.

\subsection{t odd}
In this section, we construct {\em almost 5-star factors} when the number of isolated vertices is odd.

\begin{lemma}
\label{t1}
Let $m \geq 1$. There exists an almost 5-star factor with $t=1$ on $G=\{0,1, \ldots, 30m+6\}$ with the following properties:
\begin{itemize}
\item Each forward difference $d \in \{1, 2, \dots, 15m+3\}$ appears at least once among the stars.
\item Each forward difference $d \in \{1, 2, \dots, 15m+3\}$ appears no more than twice among the stars.
\item There are no wrap-around edges.
\item There is one mixed star.
\end{itemize}

\end{lemma}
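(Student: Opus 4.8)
The plan is to treat this as an explicit packing problem and to solve it by a direct parametric construction rather than by induction. First I would translate the four bullet points into arithmetic constraints. Since $G$ has $g=30m+7=6(5m+1)+1$ vertices and $t=1$, writing $k=5m+1$ the factor must consist of $k$ vertex-disjoint copies of $K_{1,5}$ together with one isolated vertex (the little star), using $5k=25m+5$ edges. The admissible differences are exactly $\{1,\dots,3k\}=\{1,\dots,15m+3\}$, and ``no wrap-around'' means every edge $\{u,v\}$ with $u<v$ must satisfy $v-u\le 3k$, so that its difference is realized as $v-u$. If each difference appears at least once and at most twice, a short count forces the multiplicities: letting $a,b$ be the numbers of differences used once and twice, $a+b=3k$ and $a+2b=5k$ give $b=2k=10m+2$ and $a=k=5m+1$. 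So the real target is a vertex-disjoint family of $k$ five-stars on $\{0,\dots,6k\}$ missing one vertex, realizing $k$ of the differences once and $2k$ of them twice, all forward.

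Second, I would produce the stars by an explicit rule depending on $m$ and a star index, designed so the difference bookkeeping is transparent. The key design decision is to interleave short and long edges within each star: a purely greedy ``place the large differences first, then fill in the small ones'' strategy fails, because a difference close to $3k$ can only be realized by an edge $\{u,u+d\}$ with $u$ small and $u+d$ large, and if the small and large vertices are consumed independently, one is left unable to realize the largest remaining differences. To keep everything disjoint and forward I would instead let each star pair a block of low vertices with a block of high vertices, so that every star simultaneously carries some near-maximal and some small differences; the centers are then spread across the whole range rather than clustered. I expect the construction to split naturally into a \emph{low} family and a \emph{high} family of stars that are near-translates of one another, which is exactly what is needed for the prime/pure structure.

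Third, I would arrange the prime/pure labelling to match. Since $2k$ differences occur twice, the intended picture is that the two copies of each doubled difference land in two different stars, one getting the pure label and one the prime label; if the low family is declared pure and the high family prime, then essentially all stars are monochromatic, and a single boundary star straddling the two families absorbs the parity discrepancy and is the unique mixed star. The singly-used differences can be parked in whichever family keeps this clean.

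Finally, the verification has four parts matching the bullets: coverage of every $d\in\{1,\dots,15m+3\}$, multiplicity at most two, $v-u\le 15m+3$ on every edge, and exactly one mixed star, plus the implicit requirements that the star vertex-sets are disjoint and cover all but one vertex. The main obstacle is producing one formula (or a short, explicitly tabulated list of star types, perhaps with a couple of boundary cases for small $m$ or for a residue of $m$) meeting all of these at once: the tension is between needing edges of length up to $\approx g/2$ and keeping the $k$ six-element vertex-sets disjoint while hitting the exact once/twice pattern and leaving precisely one mixed star. I would expect the bulk of the work, and the likeliest source of a needed case split, to be pinning down these boundary stars and confirming the multiplicities by a direct difference count, since the global degree/coverage structure is otherwise forced.
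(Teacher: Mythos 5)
Your reduction is sound: writing $k=5m+1$, you correctly determine that the factor must consist of $k$ disjoint $5$-stars missing one vertex, that ``no wrap-around'' forces every edge to have length at most $3k=15m+3$, and that the once/twice multiplicity pattern is forced to be $k$ differences used once and $2k$ used twice. Your proposed architecture --- a pure ``low'' family, a prime ``high'' family, and a single mixed boundary star absorbing the discrepancy --- is in fact the architecture of the paper's proof. But what you have written is a plan, not a proof: the entire content of the lemma is the explicit construction, and you never exhibit it. You say you ``would produce the stars by an explicit rule depending on $m$ and a star index'' and that you ``expect the construction to split naturally into a low family and a high family,'' but no rule is given, so none of the four bullets (coverage of every difference, multiplicity at most two, no wrap-around, exactly one mixed star), nor the implicit disjointness and near-spanning requirements, is actually verified. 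You yourself flag that pinning down the boundary stars and confirming the multiplicities is ``the bulk of the work''; that work is precisely what is missing, and it is where all the difficulty of the lemma lives.

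For comparison, the paper's proof (split on the parity of $m$, with special handling of small $m$) gives explicit parametric families: $P_1$ with centers $0,1,\ldots$ and leaves in consecutive blocks just below the midpoint, so each such star covers five consecutive differences; $P_2$ with centers just above the midpoint covering exactly the differences divisible by $6$; one explicitly written mixed star $M$ that absorbs the few leftover pure differences and contributes two prime edges; and then a \emph{greedy} family $P_3$ of prime stars (always take the smallest available vertex as center and the largest available vertices as leaves), whose prime differences are automatically distinct because the largest difference still realizable among the leftover vertices is smaller than the prime differences already used in $M$. Note that this last step runs against your second-paragraph claim that a greedy placement must fail: greed does work for the prime half, precisely because the pure families are laid down first and confine the leftover vertices to the middle-to-high range. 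Until you supply formulas of this kind (or a complete tabulated list), together with the disjointness check and the difference count for each case, the lemma remains unproved.
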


\begin{proof}
Let $V=\{0,1, \ldots, 30m+6\}$ be a set of $30m+7$ vertices.  If $m$ is odd, we construct the following sets of pure stars, $P_1$ and $P_2$, and a mixed star, $M$ on $V \backslash \{(30m+5)\}$.

\begin{align*}
  \textrm{Let } P_1&=\{(i-1; j, j-1, j-2, j-3, j-4)\},  \\
&\mbox{where } i = 1, \ldots, \frac{5m+1}{2} \mbox{ and } j=(15m+7)-5i.
\end{align*}

For $P_2$, if $m=1$, $P_2$ is an empty set. If $m \geq 3$, then we let 

\begin{align*}
    P_2&= \{15m+3+i; j, j-6, j-12, j-18, j-24\},    \\
    &\mbox{where } i=1, 2, \ldots, \frac{m-1}{2} \mbox{ and } j=30m+18-29i.
\end{align*}

\[\textrm{Let } M=\{(15m+3); (30m+6),(30m+0), (30m-6), (30m+4)', (30m+3)'\}.\]

Let $D=\{1, 2, \ldots, 15m+3\}$  denote the pure edge set. Let $D_0^*=\{ (15m+3), (15m-3), (15m-9) \}$ and $D_1^*=\{6k | k=1, \ldots, \frac{5m-5}{2}\}$. Note that $D_1^*$ is empty if $P_2$ is empty, that is when $m=1$. Then the pure edges in $M$ exhaust the differences in $D_0^*$. Also, the stars in $P_2$ exhaust the differences in $D_1^*$, and the stars in $P_1$ exhaust the differences in $D \setminus (D^*_0 \cup D^*_1)$.

The mixed star also contains two prime edges covering the forward differences $(15m+1)'$ and $(15m)'$. We construct a set of $2m$ more prime stars $P_3$ simply by always choosing the next available smallest vertex for the center, and the set of  next available largest vertices for the leaves. In this case, the smallest possible vertex is $(15m+4)$ when $m=1$, and the largest possible vertex is $(30m+2)$. So, the maximum possible length of a prime edge among these leftover vertices is $(15m-2)$. Note that the largest available vertex $(30m+2)$ is fixed, but the value for the smallest available vertex grows as $m$ grows. Therefore, the maximum length possible for a prime edge happens when $m=1$, and it only decreases as  $m$ increases. However, at this point, the only prime edges used have differences $(15m+1)'$ and $(15m)'$. Hence, this process guarantees that all prime edges used in $P_3$ will have distinct forward differences from the set $\{(15m-1)', (15m-2)', \ldots, 1'\}$. Thus, $P_1 \cup P_2 \cup P_3$ partition $V \setminus \{(30m+5)\}$.

If $m \geq 2$ is even, then we construct the following sets of pure stars, $P_1$ and $P_2$, and a mixed star, $M$ on $V \backslash \{(30m+1)\}$. Note that the case of $m=0$ will be discussed in Lemma~\ref{t1v42}.

\begin{align*}
   \textrm{Let }  P_1&=\{i-1; j, j-1, j-2, j-3, j-4\},\\
    &\mbox{where }i=1, \ldots, \frac{5m}{2} \mbox{ and } j=15m+4-5i.
\end{align*}

If $m \leq 6$, then let 
\begin{align*}
    P_2&= \{((15m-1)+i; j,j-6,j-12,j-18,j-24)\}    \\
    &\mbox{where } i =1, \ldots, \frac{m}{2} \leq 3 \mbox{ and } j=30m+29-29i.
\end{align*}

If $m \geq 8$, then let
\begin{align*}
     P_2&= \{((15m-1)+i; j,j-6,j-12,j-18,j-24)\}   \\
    & \mbox{where } i =1, 2, 3 \mbox{ and } j=30m+29-29i. \\
     P_2&= \{((15m)+i; j,j-6,j-12,j-18,j-24)\}   \\
    &\mbox{where } i = 4, \ldots, \frac{m}{2}  \mbox{ and } j=30m+30-29i.
\end{align*}
\[\textrm{Let } M=\{(15m+3); (30m+6),(30m+5),(30m+4),(30m+3)',(30m+2)'\}.\]

Let $D=\{1,2, \ldots, 15m+3\}$, $D_0^* = \{ (15m+3), (15m+2), (15m+1) \}$, and $D_1^*= \{6k | k=1, \ldots, \frac{5m}{2}\}$. Then, the pure edges in $M$ exhaust the differences in $D_0^*$. Also, the stars in $P_2$ exhaust the differences in $D_1^*$. Thus, the stars in $P_{1}$ exhaust the differences in $D \setminus (D^*_0 \cup D^*_1)$.

The mixed edges in $M$ use the differences $(15m)'$ and $(15m-1)'$. We construct a set of $2m$ more prime stars $P_3$ simply by always choosing the next available smallest vertex for the center and the set of next available largest vertices for the leaves. In this case, the smallest possible vertex is $(15m+1)$ when $m=2$, and the largest possible vertex is $(30m-1)$. So, the maximum possible length of a prime edge among these leftover vertices is  $(15m-2)$. Note that the largest available vertex is fixed, but the smallest available vertex grows as $m$ grows. Therefore, the maximum possible length for a prime edge occurs when $m=2$ and it only decreases as $m$ increases. However, at this point, the only prime edges used have differences $(15m-1)'$ and $(15m)'$. Thus, this process guarantees that all prime edges used in $P_3$ will have distinct forward differences from the set $\{(15m-2)', (15m-3)', \ldots, 1'\}$. Thus, $P_1 \cup P_2 \cup P_3$ partition $V \setminus \{(30m+1)\}$.

In each case, we have constructed an almost 5-star factor with one isolated vertex and the desired properties.

\end{proof}


\begin{lemma}
\label{t3}
Let $m \geq 0$. There exists an almost 5-star factor with $t=3$ on $G=\{0,1, \ldots, 30m+26\}$ with the following properties:
\begin{itemize}
\item Each forward difference $d \in \{1, 2, \dots, \frac{30m+26}{2}\}$ appears at least once among the stars.
\item Each forward difference $d \in \{1, 2, \dots, \frac{30m+26}{2}\}$ appears no more than twice among the stars.
\item There is one mixed star.
\item There are no wrap-around edges.
\item The $3$ isolated vertices form a prime star of size $2$.

\end{itemize}

\end{lemma}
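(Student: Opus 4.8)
The plan is to reproduce the explicit construction of Lemma~\ref{t1} for $t=1$, adapting the edge bookkeeping to accommodate the three isolated vertices. First I would fix the count. With $v=30m+27$ vertices $\{0,1,\ldots,30m+26\}$ and $g=v$, each $5$-star occupies $6$ vertices and the little star (here the $K_{1,2}$ on the $3$ isolated vertices) occupies $3$, so there are exactly $5m+4$ five-stars, carrying $5(5m+4)=25m+20$ edges; with the little star this gives $25m+22$ edges in all. The forward differences to be realized are $1,2,\ldots,15m+13$, a total of $15m+13$ values. Balancing $a$ differences used once against $b$ used twice gives $a+b=15m+13$ and $a+2b=25m+22$, hence $b=10m+9$ and $a=5m+4$. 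So I must produce $15m+13$ pure edges and $10m+9$ prime edges. Assigning $2$ prime edges to a mixed star $M$ and $2$ prime edges to the little star leaves $10m+5$ prime edges, filling $2m+1$ prime $5$-stars, while the pure edges fill $3m+2$ pure $5$-stars together with $3$ pure edges of $M$; the star totals $3m+2+1+2m+1=5m+4$ and the vertex count $6(5m+4)+3=30m+27$ both check out.

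Next, splitting on the parity of $m$ exactly as in Lemma~\ref{t1}, I would exhibit four families in closed form. The first, $P_1$, consists of consecutive-difference pure stars $(c;\,c+j,\,c+j-1,\,c+j-2,\,c+j-3,\,c+j-4)$ whose centers sweep an initial block of vertices, so that the edges tile a long interval of pure differences. The second, $P_2$, consists of pure stars of the form $(c;\,j,\,j-6,\,j-12,\,j-18,\,j-24)$, collecting the residual multiples of $6$ that $P_1$ misses. The third is the single mixed star $M$, placed at the top of the vertex range so that its three pure edges reach three prescribed pure differences near $15m+13$ and its two prime edges reach the two largest prime differences.

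The crux is the fourth family, the greedy prime stars $P_3$, together with the little star. After $P_1$, $P_2$, and $M$ consume their vertices, the remaining vertices form an interval; I would build $P_3$ by repeatedly taking the smallest remaining vertex as center and the five largest remaining vertices as leaves. This makes the prime differences produced strictly decrease as the construction proceeds, and since the two largest prime differences are already spent on $M$, every prime difference arising in $P_3$ is distinct, lies below that threshold, and yields no wrap-around edge. The final three leftover vertices then form the little $K_{1,2}$, which should absorb exactly the two smallest outstanding prime differences, closing the partition of $\{0,\ldots,30m+26\}$.

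The hard part will be the interaction of the little star with the greedy argument. Because two prime differences are carried by the $K_{1,2}$ rather than by a full $5$-star, I must arrange the greedy sweep so that, after the little star's three vertices are removed, the prime differences it and $P_3$ produce remain pairwise distinct and cover the required initial segment of differences exactly once. As in Lemma~\ref{t1}, this reduces to a monotonicity check: the smallest available center grows with $m$ while the largest available leaf stays bounded, so the maximal prime difference in $P_3$ is governed by the smallest admissible $m$ in each parity class. Verifying that this maximum stays strictly below the two differences reserved for $M$, uniformly in $m$ and in both parities, and that no two prime differences collide across $P_3$ and the little star, is where the real work lies.
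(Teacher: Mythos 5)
Your arithmetic is correct and matches the paper exactly ($3m+2$ pure stars, one mixed star, $2m+1$ prime stars, and a little $K_{1,2}$; $15m+13$ pure edges and $10m+9$ prime edges), and your overall strategy---families $P_1$, $P_2$, a mixed star $M$, greedy prime stars $P_3$, split on the parity of $m$---is precisely the paper's. But what you have written is a plan, not a proof: in a constructive existence lemma the explicit construction \emph{is} the proof, and you defer it. You never specify the centers and leaves of $P_1$, $P_2$, and $M$, so you cannot verify the first required property, that the pure edges cover every difference in $\{1,\ldots,15m+13\}$ exactly once. In the paper this verification is exactly the role of the exceptional sets $D_0^*$ and $D_1^*$, and it forces structural changes relative to the $t=1$ template you propose to copy: for odd $m$ the paper needs an additional pure star $P_0=\{(15m+13);(30m+26),(30m+25),(30m+24),(30m+23),(30m+22)\}$ covering the five largest differences, a feature with no analogue in Lemma~\ref{t1} and absent from your sketch. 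You yourself flag the prime-difference distinctness check as ``where the real work lies,'' which is an admission that the core of the argument is not carried out.

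Your handling of the little star also contains a step that would fail as stated. You assert that after removing $P_1$, $P_2$, $M$ ``the remaining vertices form an interval,'' and you plan to let the greedy sweep terminate in three leftover vertices realizing ``the two smallest outstanding prime differences.'' Neither holds for a construction of this shape: the leaves of each $P_2$ star $(c;\,j,\,j-6,\,j-12,\,j-18,\,j-24)$ are spaced six apart, so once $P_2$ is nonempty the vertex set available to $P_3$ has holes and is not an interval; consequently you have no control over which three vertices survive the greedy process, and the two differences they form are not yours to choose---they could collide with differences already used in $P_3$ or in $M$. The paper sidesteps this entirely by fixing the little star's vertices \emph{in advance} and deleting them before the construction starts: for odd $m$, $L=\{(15m+9);(30m+21)',(30m+20)'\}$ with the large differences $(15m+12)'$ and $(15m+11)'$, which are then checked against $M$'s prime differences $(15m+9)',(15m+8)'$ and the greedy bound $(15m+6)'$ (and similarly, with different explicit choices, for even $m$). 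To complete your argument you must either prove your interval claim (false in this template) or prescribe the three little-star vertices explicitly and redo the difference bookkeeping around them, as the paper does.
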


\begin{proof}

Let $V=\{0,1, \ldots, 30m+26\}$ be a set of $30m+27$ vertices.  If $m$ is odd, we give the following sets of pure stars  $P_0$, $P_1$ and $P_2$; prime stars $P_3$; a mixed star $M$ on $V \setminus \{(15m+9), (30m+21), (30m+20)\}\}$.

 \[\textrm{Let } P_0=\{(15m+13); (30m+26), (30m+25), (30m+24), (30m+23), (30m+22)\}.\]

\begin{align*}
    \textrm{Let } P_1&=\{(i-1; j, j-1, j-2, j-3, j-4)\}    \\
    &\mbox{where } i =1, 2, \ldots, \frac{5m+3}{2} \mbox{ and } j=15m+13-5i.
\end{align*}

If $m=1$, $P_2$ is an empty set. If $3 \leq m \leq 5$, then let
\begin{align*}
    P_2&= \{((15m+10)+i; j,j-6,j-12,j-18,j-24)\}    \\
    &\mbox{where } i =1, \ldots, \frac{m-1}{2} \leq 2 \mbox{ and } j=30m+25-29i.
\end{align*}

If $m \geq 7$, let
\begin{align*}
     P_2&= \{((15m+10)+i; j,j-6,j-12,j-18,j-24)\}   \\
    &\mbox{where } i =1, 2 \mbox{ and } j=30m+25-29i. \\
    P_2&= \{((15m+11)+i; j,j-6,j-12,j-18,j-24)\} \\
    &\mbox{where } i =3, 4, \ldots, \frac{m-1}{2}  \mbox{ and } j=30m+26-29i.
\end{align*}

 \[\textrm{Let } M=\{(15m+10); (30m+19)', (30m+18)', (30m+13), (30m+7), (30m+1)\}.\]

 Let $D=\{1, 2, \ldots, 15m+13\}$, $D^*_0=\{(15m+13),(15m+12),(15m+11),(15m+10),(15m+9), (15m+3),(15m-3),(15m-9)\}$, and $D^*_1=\{6k | k=1, \ldots, \frac{5m-5}{2}\}$. Note that $D_1^*$ is empty if $P_2$ is empty that is when $m=1$. The pure edges in $P_0$ and $M$ exhaust the differences in $D^*_0$. Also, the stars in $P_2$ exhaust the differences in $D^*_1$. Therefore, the stars in $P_{1}$ exhaust the differences in $D \setminus (D^*_0 \cup D^*_1)$.

 The mixed star also contains two prime edges covering the forward differences $(15m+9)'$ and $(15m+ 8)'$. We construct a set of $2m+1$ more prime stars $P_3$ simply by always choosing the next available smallest vertex for the center, and the set of next available largest vertices for the leaves. In this case, the smallest possible vertex is $(15m+11)$ when $m=1$, and the largest possible vertex is $(30m+17)$. So, the maximum possible length of a prime edge among these leftover vertices is $(15m+6)$. Note that the largest available vertex is fixed, but the smallest available vertex grows as $m$ grows. Therefore, the maximum length possible for a prime edge occurs when $m=1$ and it only decreases as $m$ increases. However, at this point, the only prime edges used have differences $(15m+9)'$ and $(15m+8)'$ in $M$. Thus, this process guarantees that all prime edges will have distinct forward differences from the set $\{(15m+6)', (15m+5)', \ldots, 1'\}$. Thus, $P_1 \cup P_2 \cup P_3$ partition $V \setminus \{(15m+9), (30m+21), (30m+20)\}\}$. Now let $L=\{(15m+9); (30m+21)', (30m+20)'\}$ be the prime star $L$, which contains differences $\{(15m+12)', (15m+11)'\}$.


If $m$ is even, then we give the following sets of pure stars  $P_1$ and $P_2$; prime stars $P_3$; and a mixed star $M$ on $V \setminus \{(15m+13), (30m+21), (30m+26)\}\}$.

\begin{align*}
    \textrm{Let } P_1&=\{(i-1; j, j-1, j-2, j-3, j-4)\}    \\
    &\mbox{where } i =1, 2, \ldots, \frac{5m+4}{2} \mbox{ and } j=15m+16-5i.
\end{align*}

If $m=0$, then $P_2$ is an empty set. If $m \geq 2$, then let
\begin{align*}
    P_2&= \{((15m+13)+i; j,j-6,j-12,j-18,j-24)\}    \\
    &\mbox{where } i =1, 2, \ldots, \frac{m}{2}  \mbox{ and } j=30m+43-29i
\end{align*}

\begin{align*}
     \textrm{Let } M&=\{(15m+12); (30m+22)', (30m+23)',(30m+25), (30m+24),\\
 &(30m+18)\}.
\end{align*}

 Let $D=\{1, 2, \ldots, 15m+13\}$, $D^*_0=\{(15m+13),(15m+12),(15m+6)\}$, and $D^*_1=\{6k | k=1, \ldots, \frac{5m}{2}\}$. Note that $D_1^*$ is empty if $P_2$ is empty, that is when $m=0$. Then the pure edges in $M$ exhaust the differences in $D_0^*$. Also, the stars in $P_2$ exhaust the differences in $D_1^*$, and the stars in $P_{1}$ exhaust the differences in $D \setminus (D^*_0 \cup D^*_1)$.
 
 The mixed star also contains two prime edges covering the forward differences $(15m+11)'$ and $(15m+ 10)'$. We construct a set of $2m+1$ more prime stars $P_3$ simply by always choosing the next available smallest vertex for the center, and the set of next available largest vertices for the leaves. In this case, the smallest possible vertex is $(15m+14)$ when $m=0$, and the largest possible vertex is $(30m+20)$. So, the maximum possible length of a prime edge among these leftover vertices is $(15m+6)$. Note that the largest available vertex is fixed, but the smallest available vertex grows as $m$ grows. Therefore, the maximum length possible for a prime edge occurs when $m= 0$ and it only decreases as $m$ increases. However, at this point, the only prime edges used have differences $(15m+10)'$ and $(15m+11)'$ in $M$. Thus this process guarantees that all prime edges will have distinct forward differences from the set $\{(15m+6)', (15m+5)', \ldots, 1'\}$. Thus, $P_1 \cup P_2 \cup P_3$ partition $V \setminus \{(15m+13), (30m+21), (30m+26)\}$. Now let $L=\{(15m+13), (30m+21), (30m+26)\}$ be the prime star $L$, which contains differences $\{(15m+8)', (15m+13)'\}$.

In each case, we have constructed an almost 5-star factor with a little star of size $2$ and the desired properties.

\end{proof}


\begin{lemma}
\label{t5}
Let $m \geq 1$. There exists an almost 5-star factor with $t=5$ on $G=\{0,1, \ldots, 30m+16\}$ with the following properties:

\begin{itemize}
\item Each forward difference $d \in \{1, 2, \dots, (15m+8)\}$ appears at least once among the stars.
\item Each forward difference $d \in \{1, 2, \dots, (15m+8)\}$ appears no more than twice among the stars.
\item There are no wrap-around edges.
\item There is one mixed star.
\item The $5$ isolated vertices form a prime star of size $4$.

\end{itemize}
\end{lemma}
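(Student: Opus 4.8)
The plan is to follow verbatim the architecture of Lemmas~\ref{t1} and~\ref{t3}: assemble the factor from a block $P_1$ of consecutive-difference pure stars, a block $P_2$ of ``difference-$6$'' pure stars, one mixed star $M$, a greedily built block $P_3$ of prime stars, and the little star $L$ on the five isolated vertices. I would first pin down the numerical skeleton, since every later check is forced by it. On $V=\{0,1,\dots,30m+16\}$ we have $|V|=30m+17$; deleting the $t=5$ isolated vertices leaves $30m+12=6(5m+2)$ vertices, hence exactly $5m+2$ full $5$-stars besides the size-$4$ little star. The $5m+2$ full stars carry $25m+10$ edges and $L$ carries $4$, for $25m+14$ edges spread over the $15m+8$ differences of $D=\{1,\dots,15m+8\}$, each used once or twice; solving $x+y=15m+8$, $x+2y=25m+14$ gives $x=5m+2$ differences used once (pure only) and $y=10m+6$ used twice (pure $+$ prime). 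Matching this against the five families forces $|P_1|+|P_2|=3m+1$, $M$ to contribute $3$ pure and $2$ prime edges, $|P_3|=2m$ (hence $10m$ prime edges), and $L$ the remaining $4$ prime edges.

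I would then split on the parity of $m$ and write the pure part explicitly, exactly as in the earlier lemmas. Here $P_1$ is the run of stars $(i-1;\,j,j-1,j-2,j-3,j-4)$ with small centers $i-1$ and $j$ a decreasing linear function of $i$, which tiles an initial segment of the vertices and sweeps out a contiguous block of differences; $P_2$ is the run of stars $(c;\,j,j-6,j-12,j-18,j-24)$ chosen so that its differences are precisely the multiples of $6$ left uncovered; and $M$ is pinned so that its three pure edges realize the three top differences. As in Lemmas~\ref{t1} and~\ref{t3} I would record $D_0^*$ (the pure differences of $M$), $D_1^*$ (the multiples of $6$ covered by $P_2$), and then verify the disjoint partition $D=(D\setminus(D_0^*\cup D_1^*))\,\sqcup\,D_1^*\,\sqcup\,D_0^*$, with $P_1$ exhausting the first block. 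The smallest values of $m$ (where $P_2$ and $D_1^*$ are empty or nearly so) would be isolated as explicit sub-cases so that all index ranges remain valid; if a stray top difference is awkward to reach, I would peel it off with a dedicated header star in the spirit of the $P_0$ of Lemma~\ref{t3}.

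For the prime part, the two prime edges of $M$ are prescribed to cover the two largest prime differences not taken by $L$, and $P_3$ is built greedily: repeatedly take the smallest unused vertex as a center and its five largest unused vertices as leaves. Two observations close this step. First, since centers weakly increase and leaves weakly decrease as the procedure runs, the prime differences produced are strictly decreasing, hence pairwise distinct. Second, an extremal computation at the smallest admissible $m$ bounds the very first (largest) prime difference, namely (top available leaf) minus (smallest available center), strictly below every prime difference already reserved by $M$ and $L$; since the actual maximum only drops further for larger $m$, all $10m$ primes of $P_3$ land in the bottom block of the prime range and are disjoint from the $2$ of $M$ and the $4$ of $L$. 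Finally, $L$ placed on the five leftover vertices as a prime $K_{1,4}$ is checked to realize exactly the four highest prime differences, so that $M$, $L$ and $P_3$ together account for all $10m+6$ doubled differences. The union $P_1\cup P_2\cup M\cup P_3\cup L$ is then the desired almost $5$-star factor.

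The main obstacle is the greedy packing of $P_3$ and its interface with the fixed edges of $M$ and $L$: one must certify that deleting the vertices used by $P_1,P_2,M,L$ leaves a set on which the greedy sweep never repeats a prime difference and never manufactures a difference exceeding $15m+8$, and that the two prime differences of $M$ and the four of $L$ occupy prescribed slots at the very top of the prime range with no overlap. Everything else is clerical: checking, separately for odd and even $m$ and for the initial small cases, that the closed-form index formulas for $P_1$ and $P_2$ genuinely partition $D\setminus D_0^*$ into the consecutive block and the multiples of $6$, and that all stated vertex labels stay inside $\{0,\dots,30m+16\}$ with no collisions.
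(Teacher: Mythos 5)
Your proposal is correct and takes essentially the same approach as the paper's own proof: the same five building blocks ($P_1$ with consecutive differences, $P_2$ with differences spaced by $6$, a mixed star $M$ pinned to the top pure differences, a greedily built $P_3$, and the little prime star $L$), the same parity split on $m$ with the smallest cases isolated, and the same extremal argument that the greedy prime differences of $P_3$ stay strictly below everything reserved by $M$ and $L$. Your forced numerology ($|P_1|+|P_2|=3m+1$, three pure and two prime edges in $M$, $|P_3|=2m$, and $10m+6$ doubled differences out of $15m+8$) agrees exactly with the paper's construction.
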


\begin{proof}


Let $V=\{0,1, \ldots, 30m+16\}$ be a set of $30m+17$ vertices.  If $m$ is odd, we give the following sets of pure stars $P_1$ and $P_2$; prime stars $P_3$; and a mixed star $M$ on $V \setminus \{(15m+9), (30m+16), (30m+15), (30m+14), (30m+12) \}\}$.
\begin{align*}
    \textrm{Let } P_1&=\{(i-1; j, j-1, j-2, j-3, j-4)\},    \\
    &\mbox{where } i =1, 2, \ldots, \frac{5m+3}{2} \mbox{ and } j=15m+13-5i.
\end{align*}

If $m=1$, then $P_2$ is an empty set. If $m \geq 3$, then let 
\begin{align*}
    P_2&= \{((15m+10)+i; j,j-6,j-12,j-18,j-24)\}    \\
    &\mbox{where } i =1, 2, \ldots, \frac{m-1}{2}  \mbox{ and } j=30m+25-29i.
\end{align*}

 \[\textrm{Let } M=\{(15m+10); (30m+13), (30m+11)', (30m+10)', (30m+7), (30m+1)\}.\]

Let $D=\{1, 2, \ldots, 15m+13\}$, $D^*_0=\{(15m+3),(15m-3),(15m-9)\}$, and $D^*_1=\{6k | k=1, \ldots, \frac{5m-5}{2}\}$ if $m \geq 3$. Note that $D_1^*$ is empty if $P_2$ is empty, that is when $m=1$. Then the pure edges in $M$ exhaust the differences in $D_0^*$. Also, the stars in $P_2$ exhaust the differences in $D_1^*$. Hence, the stars in $P_{1}$ exhaust the differences in $D \setminus (D^*_0 \cup D^*_1)$.

The mixed edges in $M$ use the differences $(15m+1)'$ and $(15m)'$. We construct a set of $2m$ more prime stars $P_3$ simply by always choosing the next available smallest vertex for the center, and the set of next available largest vertices for the leaves. In this case, the smallest possible vertex is $(15m+11)$ when $m=1$, and the largest possible vertex is $(30m+9)$. So, the maximum possible length of a prime edge among these leftover vertices is $(15m-2)$. Note that the largest available vertex is fixed, but the smallest available vertex grows as $m$ grows. Therefore, the maximum length possible for a prime edge occurs when $m=1$ and it only decreases as $m$ increases. However, at this point, the only prime edges used have differences $(15m+1)'$ and $(15m)'$. Hence, this process guarantees that all prime edges used in $P_3$ will have distinct forward differences from the set $\{(15m-2)', (15m-3)', \ldots, 1'\}$. Thus, $P_1 \cup P_2 \cup P_3$ partition $V \setminus \{(15m+9), (30m+16), (30m+15), (30m+14), (30m+12)\}$. Let $L=\{(15m+9); (30m+16)', (30m+15)', (30m+14)', (30m+12)'\}$ be the prime star $L$, which contains differences  $\{(15m+7)', (15m+6)', (15m+5)', (15m+3)'\}$.


If $m \geq 2$ is even, then we give the following sets of pure stars $P_1$ and $P_2$; prime stars $P_3$; and a mixed star $M$ on $V \setminus \{(15m+8),(30m+16),(30m+15),(30m+9),(30m+8)\}\}$. Note that the case of $m=0$ will be discussed in Lemma~\ref{t5v102}.

\begin{align*}
    \textrm{Let } P_1&=\{(i-1; j, j-1, j-2, j-3, j-4)\}    \\
    &\mbox{where } i =1, 2, \ldots, \frac{5m+2}{2} \mbox{ and } j=15m+10-5i.
\end{align*}

If $m=2$, then let
\[P_2= \{((15m+7); (30m+7),(30m+1),(30m-5),(30m-11),(30m-17)\} \]

If, $m \geq 4$, then let

\[P_2= P_{2a}\cup P_{2b} \textrm{ where} \]
\[P_{2a} = \{((15m+7); (30m+7),(30m+1),(30m-5),(30m-11),(30m-17)\} \textrm{, and} \]
\[P_{2b} = \{((15m+7)+i; j,j-6,j-12,j-18,j-24)\} \]
\[\mbox{for } i =2, 3, \ldots, \frac{m}{2}  \mbox{ and } j=30m+37-29i.\]

 \[\textrm{Let } M=\{(15m+6); (30m+14), (30m+13), (30m+12), (30m+11)', (30m+10)'\}.\]

 Let $D=\{1, 2, \ldots, 15m+13\}$, $D^*_0=\{(15m+8), (15m+7), (15m+6)\}$, and $D^*_1=\{6k | k=1, \ldots, \frac{5m}{2}\}$. The pure edges in $M$ exhaust the differences in $D_0^*$. Also, the stars in $P_2$ exhaust the differences in $D_1^*$. Then, the stars in $P_{1}$ exhaust the differences in $D \setminus (D^*_0 \cup D^*_1)$.

The mixed star also contains two prime edges covering the forward differences $(15m+5)'$ and $(15m+ 4)'$. We construct a set of $2m$ more prime stars $P_3$ simply by always choosing the next available smallest vertex for the center, and the set of next available largest vertices for the leaves. In this case, the smallest possible vertex is $(15m+9)$ when $m=2$, and the largest possible vertex is $(30m+6)$. So, the maximum possible length of a prime edge among these leftover vertices is $(15m-3)$. Note that the largest available vertex is fixed, but the smallest available vertex grows as $m$ grows. Therefore, the maximum length possible for a prime edge occurs when $m=2$ and it only decreases as $m$ increases. However, at this point, the only prime edges used have differences $(15m+5)'$ and $(15m+4)'$ in $M$. Hence, this process guarantees that all prime edges used in $P_3$ will have distinct forward differences from the set $\{(15m-3)', (15m-2)', \ldots, 1'\}$. Thus, $P_1 \cup P_2 \cup P_3$  partition $V \setminus \{(15m+8),(30m+16),(30m+15),(30m+9),(30m+8)\}$. Let $L=\{(15m+8);(30m+16)',(30m+15)',(30m+9)',(30m+8)'\}$ be the prime star $L$, which contains differences  $\{(15m+8)', (15m+7)'\, (15m+1)'\, (15m)'\}$.

In each case, we have constructed an almost 5-star factor with a little star of size $4$ and the desired properties.

\end{proof}

\subsection{t even}
In this section, we construct {\em almost 5-star factors} when the number of isolated vertices is even.

\begin{lemma}
\label{t0}
Let $m \geq 0$. There exists an almost 5-star factor with $t=0$ on $G=\{0,1, \ldots, 30m+11\}$ with the following properties:
\begin{itemize}
\item Each forward difference $d \in \{1, 2, \dots, 15m+5\}$ appears at least once among the stars.
\item Each forward difference $d \in \{1, 2, \dots, 15m+5\}$ appears no more than twice among the stars.
\item There are no wrap-around edges.
\item There is no mixed star.
\end{itemize}

\end{lemma}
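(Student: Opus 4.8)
The plan is a direct construction in the style of Lemmas~\ref{t1}, \ref{t3} and~\ref{t5}, made cleaner by the fact that for $t=0$ there is neither a little star nor a mixed star. I would first fix the bookkeeping the factor must satisfy. Since $|V|=30m+12=6(5m+2)$, the factor consists of $5m+2$ stars and $25m+10$ edges; if $x$ differences occur once and $y$ occur twice, then $x+y=15m+5$ and $x+2y=25m+10$, so $y=10m+5$ and $x=5m$. Thus I want $3m+1$ \emph{pure} stars realizing each difference in $\{1,\dots,15m+5\}$ exactly once, together with $2m+1$ \emph{prime} stars realizing $10m+5$ of those differences a second time. Keeping the pure and prime stars in separate families automatically makes every star non-mixed, and since $15m+5<\tfrac12(30m+12)$, any edge whose raw difference lies in $\{1,\dots,15m+5\}$ is a forward edge, so the ``no wrap-around'' property comes for free once all raw differences are confined to this range.

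The construction then splits on the parity of $m$. For the pure part I would use a family $P_1$ of \emph{consecutive-leaf} stars $(i-1;\,j,j-1,j-2,j-3,j-4)$, with $i$ running over an initial segment and $j$ a linear function of $i$ decreasing in steps of $5$, together with a family $P_2$ of \emph{step-$6$} stars $(c;\,j,j-6,j-12,j-18,j-24)$. As in Lemma~\ref{t1}, each $P_1$-star covers five consecutive differences while its top difference falls by $6$ from one star to the next, so the differences \emph{skipped} by $P_1$ form a progression of common difference $6$; these skipped differences make up a set $D^*$ that the step-$6$ stars $P_2$ are built to exhaust. The offsets are chosen so that $P_1$ exhausts $\{1,\dots,15m+5\}\setminus D^*$, $P_2$ exhausts $D^*$, and---crucially---$P_1$ occupies the small vertices while $P_2$ is placed among the largest vertices. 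Then $P_1\cup P_2$ consumes exactly $18m+6$ vertices from the two ends of $V$ and leaves a central band of $12m+6$ vertices. The prime family $P_3$ is built on this band by the greedy rule of the earlier proofs: repeatedly take the smallest unused vertex as a center and the five largest unused vertices as its leaves, which forces the prime differences to be distinct and strictly decreasing.

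Three verifications remain. First, that the centers and leaves listed in $P_1$ and $P_2$ are distinct and their union is the intended $18m+6$-element set, so the complement fed to the greedy step is exactly the central band the prime construction assumes; this is a routine disjointness-and-count check. Second, that the band has width less than $\tfrac12(30m+12)$, so the largest prime difference (largest minus smallest band vertex) is at most $15m+5$ and no prime edge wraps around; as in the earlier lemmas, it suffices to check the extreme small value of $m$ and note the width only decreases as $m$ grows. Third, the base case $m=0$, which is immediate: the pure star $(5;0,1,2,3,4)$ and the prime star $(6;7,8,9,10,11)$ partition $\{0,\dots,11\}$ and realize each of the differences $1,2,3,4,5$ twice, with no wrap-around and no mixed star.

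I expect the main obstacle to be the coupling between the two requirements on the pure part: $P_1\cup P_2$ must cover $\{1,\dots,15m+5\}$ exactly once \emph{and} leave precisely a central band on which the greedy prime construction still produces distinct, in-range forward differences. Making the linear offsets of $P_1$ and $P_2$ mesh at the boundary between the two families, and doing so consistently for both parities of $m$ (which is exactly what forces the separate even/odd formulas, together with the small-$m$ sub-splits already visible in Lemmas~\ref{t1}--\ref{t5}), is where essentially all the care lies; once the formulas are pinned down, the remaining checks are routine difference and vertex counts.
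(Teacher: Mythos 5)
Your outline coincides with the paper's own proof of Lemma~\ref{t0}: the same count ($3m+1$ pure stars covering every difference of $\{1,\dots,15m+5\}$ exactly once, $2m+1$ prime stars covering $10m+5$ of them a second time), the same two pure families split on the parity of $m$, the same greedy rule for the prime stars, and the same $m=0$ factor. The trouble is that the two structural stipulations you defer to ``routine'' offset-choosing are not routine: one of them is impossible, and the other requires an idea your plan does not contain. Take even $m\geq 2$ first. The step-$6$ stars must realize the differences $\{6,12,\dots,15m\}$, so a star realizing the difference $15m$ has its center at most $15m+11$, i.e.\ in the middle of the range; in the paper the $P_2$-centers are $15m+5+i$, immediately above $P_1$'s vertices, with their leaves strewn through the upper half in steps of $6$. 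If instead all six vertices of every $P_2$-star lay in a top block (about $3m$ vertices in all), every $P_2$-difference would be smaller than $3m<15m$. Hence $P_1\cup P_2$ can never occupy ``the two ends of $V$,'' the leftover set is never a contiguous band, and your first verification fails for \emph{every} choice of offsets. This failure is repairable, and the paper's construction is exactly the repair: the prime stars' leaves interleave with the scattered $P_2$-leaves; distinctness of the prime differences still follows from the monotone greedy structure; and the only quantitative fact needed is that $(\max-\min)$ over the leftover set is at most $15m+5$, e.g.\ $(30m+11)-(15m+6+\tfrac{m}{2})\leq 15m+5$. Contiguity was never required, but your plan, as stated, stalls at its own first check.

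For odd $m$ the situation is worse: the two families alone cannot mesh at all. Consecutive-leaf runs descending from $15m+5$ to $1$ with one skipped difference every six would have to number $\frac{15m+6}{6}=\frac{5m+2}{2}$, which is not an integer when $m$ is odd, so the run-and-gap tiling on which your $P_1$/$P_2$ split rests cannot even be set up (already at $m=1$ it produces skipped differences that nothing covers, since $P_2$ is empty there). The paper absorbs this parity defect with a third, \emph{hybrid} pure star, $P_0=\{(15m+6); (30m+11),(30m+10),(30m+9),(30m+3),(30m-3)\}$, which covers the three largest differences together with two members of the skipped progression; it is the all-pure surrogate of the mixed star $M$ of Lemmas~\ref{t1}, \ref{t3} and~\ref{t5}, needed precisely because Lemma~\ref{t0} forbids mixed stars. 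Your counting, your $m=0$ case, and your greedy analysis are all correct, but without the non-contiguous-band repair and without $P_0$ the construction cannot be completed; what you have is the paper's plan of attack rather than a proof.
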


\begin{proof}
Let $V=\{0,1, \ldots, 30m+11\}$ be a set of $30m+12$ vertices.  If $m$ is odd, we give the following sets of pure stars $P_0$, $P_1$ and $P_2$; and prime stars $P_3$ on $V$.

\begin{align*}
    \textrm{Let } P_0&=\{(15m+6); (30m+11),(30m+10),(30m+9),(30m+3),\\ 
     (30m&-3) \} \textrm{ and } P_1=\{(i-1; j, j-1, j-2, j-3, j-4)\}\\
    &\mbox{for } i = 1, \ldots, \frac{5m+1}{2} \mbox{ and } j=(15m+7)-5i.
\end{align*}

If $m=1$, $P_2$ is an empty set. If $m \geq 3$, then let 
\begin{align*}
     P_2&= \{(15m+6)+i; j, j-6, j-12, j-18, j-24\},   \\
    &\mbox{for } i=1, 2, \ldots, \frac{m-1}{2} \mbox{ and } j=(30m+21)-29i.
\end{align*}

Let $D=\{1, 2, \ldots, 15m+5\}$, and $D_1^*=\{6k | k=1, \ldots, \frac{5m-5}{2}\}$. Note that $D_1^*$ is empty if $P_2$ is empty, that is when $m=1$. The stars in $P_2$ exhaust the differences in $D_1^*$. Then the stars in $P_0 \cup P_1$ exhaust the differences in $D \setminus (D^*_1)$.

Next we construct a set of $2m+1$ prime stars $P_3$, simply by always choosing the next available smallest vertex for the center, and the set of next available largest vertices for the leaves. In this case, the smallest possible vertex is $(15m+3)$ when $m=1$, and the largest possible vertex is $(30m+8)$. So, the maximum possible length of a prime edge among these leftover vertices is $(15m+5)$. Note that the largest available vertex $(30m+8)$ is fixed, but the smallest available vertex grows as $m$ grows. Therefore, the maximum length possible for a prime edge occurs when $m=1$, and it only decreases as $m$ increases. However, at this point, no prime edges have appeared yet. Hence, this process guarantees that all prime edges used in $P_3$ will have distinct forward differences from the set $\{(15m+5)', (15m+4)', \ldots, 1'\}$. Thus, $P_1 \cup P_2 \cup P_3$ partition $V$.

If $m \geq 0$ is even, we give the following sets of pure stars $P_1$ and $P_2$; and prime stars $P_3$; on $V$.
\begin{align*}
    \textrm{Let } P_1&=\{(i-1; j, j-1, j-2, j-3, j-4)\}    \\
    &\mbox{for } i = 1, \ldots, \frac{5m+2}{2} \mbox{ and } j=(15m+10)-5i
\end{align*}

If $m=0$, $P_2$ is an empty set. If $m \geq 2$, then let 

\begin{align*}
    P_2&= \{(15m+5)+i; j, j-6, j-12, j-18, j-24\}    \\
    &\mbox{for } i=1, 2, \ldots, \frac{m}{2} \mbox{ and } j=(30m+35)-29i.
\end{align*}

Let $D=\{1, 2, \ldots, 15m+5\}$, and $D_1^*=\{6k | k=1, \ldots, \frac{5m}{2}\}$. Note that $D_1^*$ is empty if $P_2$ is empty, that is when $m=0$. The stars in $P_2$ exhaust the differences in $D_1^*$. Then, the stars in $P_1$ exhaust the differences in $D \setminus (D^*_1)$.

Next, for the prime edges, we construct a set of $2m+1$  prime stars, $P_3$, simply by always choosing the next available smallest vertex for the center, and the set of next available largest vertices for the leaves. In this case, the smallest possible vertex is $(15m+6)$ when $m=0$, and the largest possible vertex is $(30m+11)$. So, the maximum possible length of a prime edge among these leftover vertices is $(15m+5)$. Note that the largest available vertex $(30m+8)$ is fixed, but the smallest available vertex $(15m+6)$ grows as $m$ grows. Therefore, the maximum length possible for a prime edge occurs when $m=0$, and it only decreases as $m$ increases. However, at this point, no prime edges have appeared yet. Hence this process guarantees that all prime edges used in $P_3$ will have distinct forward differences from the set $\{(15m+5)', (15m+4)', \ldots, 1'\}$. Thus, $P_1 \cup P_2 \cup P_3$ partition $V$.

In each case, we have constructed an almost 5-star factor with no isolated vertex and the desired properties.

\end{proof}


\begin{lemma}
\label{t2}
Let $m \geq 1$. There exists an almost 5-star factor with $t=2$ on $G=\{0,1, \ldots, 30m+1\}$ with the following properties:
\begin{itemize}
\item Each forward difference $d \in \{1, 2, \dots, 15m\}$ appears at least once among the stars.
\item Each forward difference $d \in \{1, 2, \dots, 15m\}$ appears no more than twice among the stars.
\item There are no wrap-around edges.
\item There is an edge on the two isolated vertices.
\end{itemize}

\end{lemma}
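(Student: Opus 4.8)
The plan is to follow the explicit-construction template of Lemmas~\ref{t1}, \ref{t3}, \ref{t5} and, most closely, the even-$t$ Lemma~\ref{t0}: split on the parity of $m$, and in each case exhibit a family $P_1$ of consecutive-difference pure stars, a family $P_2$ of pure stars whose leaves are spaced $6$ apart, possibly a single correction star $P_0$, a family $P_3$ of prime stars built by a greedy rule, and finally the one edge joining the two isolated vertices.

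Before fixing any offsets I would pin down the global counts, since they dictate the size of every family. The set $V=\{0,1,\ldots,30m+1\}$ has $30m+2$ vertices; removing the two isolated vertices leaves $30m$ vertices, hence exactly $5m$ five-stars carrying $25m$ edges, which together with the single little-star edge give $25m+1$ edges. Writing $a$ and $b$ for the numbers of target differences used once and twice, the two counting properties force $a+b=15m$ and $a+2b=25m+1$, so $b=10m+1$ and $a=5m-1$. Hence I need $15m$ pure edges, one pure copy of each difference, amounting to $3m$ pure five-stars, together with $10m+1$ prime edges. Since $10m+1\equiv1\pmod5$, the prime edges do not pack into prime five-stars alone; instead I would take $2m$ prime five-stars, contributing $10m$ prime edges, and let the little-star edge be the remaining prime edge, so that, as in the even-$t$ Lemma~\ref{t0}, the construction needs no mixed star. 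This fixes the backbone: $3m$ pure stars in $P_0\cup P_1\cup P_2$, $2m$ prime stars in $P_3$, and the two isolated vertices joined by a prime edge, accounting for all $18m+12m+2=30m+2$ vertices.

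For each parity I would then write the pure families explicitly so that the pure difference set $D=\{1,\ldots,15m\}$ splits as $D=(D\setminus(D_0^*\cup D_1^*))\sqcup D_0^*\sqcup D_1^*$, with $P_1$ exhausting the first block, the correction star $P_0$ (when present) exhausting a short set $D_0^*$ of large differences, and $P_2$ exhausting the multiples-of-$6$ set $D_1^*$; the index ranges and the linear leaf-functions $j$ are chosen, just as in the earlier lemmas, to keep these three pieces disjoint and to use each small center once. The prime family $P_3$ I would build greedily, always taking the smallest unused vertex as center and the five largest unused vertices as leaves; since no prime difference is in play before $P_3$, the claim that this produces $2m$ prime stars and, from the last two vertices, a prime little edge whose forward differences are pairwise distinct and all at most $15m$ reduces to the single inequality that the largest forward difference the greedy rule can create at its first step does not exceed $15m$. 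As in the companion lemmas this bound is tightest at the base value of $m$ in each parity and only improves as $m$ grows.

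The main obstacle is bookkeeping rather than insight: the offsets in $P_0$, $P_1$, $P_2$ must be chosen so that (i) every vertex of $V$ is used exactly once, (ii) no forward difference exceeds $15m$, whence—because $15m<(30m+2)/2=15m+1$—every edge is automatically a forward edge and no wrap-around ever occurs, and (iii) the greedy prime differences neither collide with one another nor triple any difference. These conditions are sensitive to the parity of $m$ and, at the smallest $m$ in each parity class, to degeneracies such as $P_2$ being empty, so I expect the real effort to be a careful but routine verification of the difference-partition identities and the greedy inequality at the two base cases, followed by a monotonicity argument that extends them to all larger $m$. Once these checks pass in both parities, the resulting graph is an almost $5$-star factor with $t=2$ and the required edge on its two isolated vertices.
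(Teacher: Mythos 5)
Your plan is correct and is essentially the paper's own proof: the paper likewise splits on the parity of $m$, uses $3m$ pure stars ($P_1$ covering runs of consecutive differences, $P_2$ the multiples of $6$, plus a correction star $P_0$ in the odd case), builds $2m$ prime stars by the same smallest-center/largest-leaves greedy rule with the same monotonicity argument for distinct prime differences, and places a single prime edge on the two isolated vertices, exactly matching your count $a=5m-1$, $b=10m+1$ and your observation that no mixed star is needed. The only deviation is that the paper pre-designates the isolated pair ($\{15m-3,\,30m-3\}$ for $m$ odd, $\{15m,\,30m\}$ for $m$ even) so that the little edge carries the extreme prime difference $(15m)'$ lying above all greedy prime differences, whereas you let the greedy process leave the pair over; your variant also avoids collisions, since the leftover pair sits between the last greedy center and its leaves, making its difference automatically smaller than every greedy prime difference.
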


\begin{proof}
Let $V=\{0,1, \ldots, 30m+1\}$ be a set of $30m+2$ vertices.  If $m$ is odd, we give the following sets of pure stars $P_0$, $P_1$ and $P_2$; and prime stars $P_3$ on $V \setminus \{(15m-3), (30m-3)\}$.

\[\textrm{Let } P_0=\{(15m+1); (30m+1),(30m),(30m-1),(30m-2),(30m-8) \},\]

\begin{align*}
    P_1&=\{(i-1; j, j-1, j-2, j-3, j-4)\}    \\
    & \mbox{for } i = 1, \ldots, \frac{5m-1}{2} \mbox{ and } j=(15m+1)-5i.
\end{align*}

If $m=1$, $P_2$ is an empty set. If $m \geq 3$, then let 
\begin{align*}
     P_2&= \{(15m+1)+i; j, j-6, j-12, j-18, j-24\}   \\
    &\mbox{for } i=1, 2, \ldots, \frac{m-1}{2} \mbox{ and } j=(30m+16)-29i.
\end{align*}

\[\textrm{Let }L=\{(15m-3), (30m-3)\}\]

Let $D=\{1, 2, \ldots, 15m\}$, and $D_1^*=\{6k | k=1, \ldots, \frac{5m-5}{2}\}$. Note that $D_1^*$ is empty if $P_2$ is empty, that is when $m=1$. The stars in $P_2$ exhaust the differences in $D_1^*$. Then the stars in $P_1$ exhaust the differences in $D \setminus (D^*_1)$.

Next we construct a set of $2m$ prime stars $P_3$ simply by always choosing the next available smallest vertex for the center, and the set of next available largest vertices for the leaves. In this case, the smallest possible vertex is $(15m-2)$, and the largest possible vertex is $(30m-4)$. So, the maximum possible length of a prime edge among these leftover vertices is $(15m-2)$ for any $m$. However, at this point, no prime edges have appeared yet. Hence this process guarantees that all prime edges used in $P_3$ will have distinct forward differences from the set $\{(15m-1)', (15m-2)', \ldots, 1'\}$. Thus, $P_0 \cup P_1 \cup P_2 \cup P_3$ partition $V \setminus \{(15m-3), (30m-3)\}$. Now let $L = \{(15m-3), (30m-3)'\}$ be the prime star $L$, which contains difference $\{(15m)' \}$.

If $m \geq 2$ is even, we give the following sets of pure stars $P_1$ and $P_2$; and prime stars $P_3$ on $V \setminus \{(15m), (30m)\}$. Note that the case of $m=0$ will be discussed in  Lemma~\ref{t2v12}.

\begin{align*}
    \textrm{Let } P_1&=\{(i-1; j, j-1, j-2, j-3, j-4)\},    \\
    &\mbox{for } i = 1, \ldots, \frac{5m}{2} \mbox{ and } j=(15m+4)-5i.
\end{align*}

If $m=0$, $P_2$ is an empty set. If $m \geq 2$, then let 

\begin{align*}
     P_2&= \{(15m)+i; j, j-6, j-12, j-18, j-24\}   \\
    &\mbox{for } i=1, 2, \ldots, \frac{m}{2} \mbox{ and } j=(30m+30)-29i.
\end{align*}

\[\textrm{Let } L=\{(15m), (30m)\}\]

Let $D=\{1, 2, \ldots, 15m\}$, and $D_1^*=\{6k | k=1, \ldots, \frac{5m}{2}\}$. Note that $D_1^*$ is empty if $P_2$ is empty, that is when $m=0$. The stars in $P_2$ exhaust the differences in $D_1^*$. Then the stars in $P_1$ exhaust the differences in $D \setminus (D^*_1)$.

Next we construct a set of $2m$ prime stars $P_3$ simply by always choosing the next available smallest vertex for the center, and the set of next available largest vertices for the leaves. In this case, the smallest possible vertex is $(15m+2)$, and the largest possible vertex is $(30m-1)$. So, the maximum possible length of a prime edge among these leftover vertices is $(15m-3)$. However, at this point, no prime edges have appeared yet. Hence, this process guarantees that all prime edges used in $P_3$ will have distinct forward differences from the set $\{(15m-1)', (15m-2)', \ldots, 1'\}$. Thus, $P_1 \cup P_2 \cup P_3$ partition $V \setminus \{(15m), (30m)\}$. Now let $L = \{(15m), (30m)'\}$ be the prime star $L$, which contains difference $\{(15m)' \}$.

In each case, we have constructed an almost 5-star factor with two isolated vertex and the desired properties.

\end{proof}


\begin{lemma}
\label{t4}
Let $m \geq 0$. There exists an almost 5-star factor with $t=4$ on $G=\{0,1, \ldots, 30m+21\}$ where $m \geq 0$ with the following properties:
\begin{itemize}
\item Each forward difference $d \in \{1, 2, \dots, 15m+10\}$ appears at least once among the stars.
\item Each forward difference $d \in \{1, 2, \dots, 15m+10\}$ appears no more than twice among the stars.
\item There are no wrap-around edges.
\item The $4$ isolated vertices form a prime star of size $3$.
\end{itemize}

\end{lemma}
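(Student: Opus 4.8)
The plan is to follow the explicit block-construction template of the preceding even-$t$ lemmas (Lemmas~\ref{t0} and~\ref{t2}); since $t=4$ is even I expect no mixed star, and the role played by the mixed star in the odd cases will instead be played by the prime little star. First I fix the global counts. On $V=\{0,1,\dots,30m+21\}$ there are $30m+22$ vertices; deleting the four vertices that carry the little star $K_{1,3}$ leaves $30m+18$ vertices, covered by exactly $(30m+18)/6=5m+3$ disjoint $5$-stars. The edge budget is $5(5m+3)+3=25m+18$, while the target differences $\{1,\dots,15m+10\}$ number $15m+10$. Thus if the pure edges realize each difference once, there remain $25m+18-(15m+10)=10m+8$ prime edges, so $10m+8$ differences occur twice and $5m+2$ occur once. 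Concretely I aim to produce $3m+2$ pure $5$-stars (exhausting $\{1,\dots,15m+10\}$), $2m+1$ prime $5$-stars, and a prime $L=K_{1,3}$ contributing three prime edges, noting that $5(2m+1)+3=10m+8$ matches the prime count, that $3m+2+2m+1=5m+3$ matches the number of $5$-stars, and that the $2m+1$ greedy stars will consume exactly the $12m+6$ middle vertices left over.

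Next I split on the parity of $m$ and write the pure stars as arithmetic families, exactly as before. A family $P_1$ of stars $(i-1;\,j,j-1,j-2,j-3,j-4)$ with $j$ dropping by $5$ while the center rises by $1$ covers five consecutive differences per star and skips one difference at each step; a family $P_2$ of stars $(c;\,j,j-6,j-12,j-18,j-24)$ sweeps up precisely the skipped values, and a single special star $P_0$ (and/or a careful choice of the starting index) is used to capture the top few differences and to force the skipped values into the arithmetic progression that $P_2$ handles. I would record $D=\{1,\dots,15m+10\}$ as a disjoint union $(D\setminus(D_0^*\cup D_1^*))\cup D_0^*\cup D_1^*$, with $D_1^*$ the values handled by $P_2$ and $D_0^*$ those handled by $P_0$, and verify that $P_0\cup P_1\cup P_2$ realizes every difference in $D$ exactly once, with star counts chosen so that $|P_0|+|P_1|+|P_2|=3m+2$.

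For the prime part I reuse the greedy rule: repeatedly take the smallest unused middle vertex as center and the five largest unused vertices as leaves. Because the center advances by $1$ while the top five leaves drop by $5$ at each step, consecutive prime stars cover blocks of five differences spaced $6$ apart, so the prime differences produced are automatically pairwise distinct and contained in $\{1,\dots,B\}$, where $B$ is the spread between the largest and smallest leftover vertices. I then bound $B\le 15m+10$ by evaluating at the smallest admissible $m$ (the largest leftover vertex is fixed while the smallest grows with $m$), which rules out wrap-around. Finally I choose the four isolated vertices so that $L=K_{1,3}$ realizes three high differences that the greedy process omits; since these lie above $B$ they are distinct from every $P_3$ difference, and one checks they are mutually distinct and at most $15m+10$. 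Assembling $P_0\cup P_1\cup P_2\cup P_3\cup L$ then partitions $V$ and delivers the four stated properties.

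The main obstacle is the simultaneous prime-side bookkeeping: the four isolated vertices must be selected so that (i) the greedy stars consume the remaining middle vertices with neither surplus nor deficit, (ii) the whole collection of prime differences---those of $P_3$ together with the three of $L$---is distinct and bounded by $15m+10$, and (iii) no difference is thereby forced to a third occurrence. The most delicate point is the alignment that forces the $P_1$-gaps into the progression $P_2$ handles, together with the boundary arithmetic for small $m$, in particular $m=0$ (where $P_2$ is empty and the ranges are tightest); I would therefore check $m=0$ directly and then run the uniform argument for even $m\ge 2$ and odd $m\ge 1$.
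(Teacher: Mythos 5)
Your proposal follows essentially the same route as the paper's proof: the paper likewise splits on the parity of $m$, builds the $3m+2$ pure stars from a special star $P_0$ (capturing the top differences), a consecutive-difference family $P_1$, and a difference-$6$ family $P_2$ exhausting the skipped values, then constructs $2m+1$ prime stars greedily (smallest available center, largest available leaves), whose differences are automatically distinct and bounded by $15m+6$, and finally takes $L$ to be a prime $K_{1,3}$ on the four isolated vertices covering the three high differences $15m+8$, $15m+9$, $15m+10$, which lie above that bound. Your vertex and edge counts ($3m+2$ pure stars, $2m+1$ prime stars, $12m+6$ middle vertices, three prime edges on $L$) and your verification plan coincide with the paper's construction, so the approach is correct and not genuinely different.
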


\begin{proof}
Let $V=\{0,1, \ldots, 30m+21\}$ be a set of $30m+22$ vertices.  If $m$ is odd, we give the following sets of pure stars $P_0$, $P_1$ and $P_2$; and prime stars $P_3$ on $V \setminus \{(30m+19), (30m+18),(30m+17), (15m+9)\}$.

\[\textrm{Let }P_0=\{(15m+11); (30m+21),(30m+20),(30m+14),(30m+8),(30m+2) \}, \]
\[\textrm{ and }P_1=\{(i-1; j, j-1, j-2, j-3, j-4)\},\]
\[\mbox{for } i = 1, \ldots, \frac{5m+3}{2} \mbox{ and } j=(15m+13)-5i.\]

If $m=1$, $P_2$ is an empty set. If $m \geq 3$, then let 

\[P_2= \{(15m+12)+i; j, j-6, j-12, j-18, j-24\},\]
 \[\mbox{for } i=1, 2, \ldots, \frac{m-1}{2} \mbox{ and } j=(30m+26)-29i.\]

\[\textrm{Let } L=\{(30m+19), (30m+18),(30m+17), (15m+9)\}\]

Let $D=\{1, 2, \ldots, 15m+10\}$, and $D_1^*=\{6k | k=1, \ldots, \frac{5m-5}{2}\}$. Note that $D_1^*$ is empty if $P_2$ is empty, that is when $m=1$. The stars in $P_2$ exhaust the differences in $D_1^*$. Then the stars in $P_1$ exhaust the differences in $D \setminus (D^*_1)$.

Next we construct a set of $2m+1$ prime stars $P_3$ simply by always choosing the next available smallest vertex for the center, and the set of next available largest vertices for the leaves. In this case, the smallest possible vertex is $(15m+10)$, and the largest possible vertex is $(30m+16)$. So, the maximum possible length of a prime edge among these leftover vertices is $(15m+6)$ at any $m$. However, at this point, no prime edges have appeared yet. Hence, this process guarantees that all prime edges used in $P_3$ will have distinct forward differences from the set $\{(15m+6)', (15m+5)', \ldots, 1'\}$. Thus, $P_1 \cup P_2 \cup P_3$ partition $V \setminus \{(30m+19), (30m+18),(30m+17), (15m+9)\}$. Now let $L = \{ (15m+9) ; (30m+19)', (30m+18)',(30m+17)'\}$ be the prime star $L$, which contains difference $\{(15m+10)',(15m+9)',(15m+8)' \}$.

If $m \geq 0$ is even, we give the following sets of pure stars $P_0$, $P_1$ and $P_2$; and prime stars $P_3$ on $V \setminus \{(30m+16), (30m+15),(30m+14), (15m+6)\}$.

\[\textrm{Let } P_0=\{(15m+11); (30m+21),(30m+20),(30m+19),(30m+18),(30m+17) \},\]

\[P_1=\{(i-1; j, j-1, j-2, j-3, j-4)\},\]
\[\mbox{for } i = 1, \ldots, \frac{5m+2}{2} \mbox{ and } j=(15m+10)-5i.\]

If $m=0$, $P_2$ is an empty set. If $m \geq 2$, then let 

\[P_2= \{(15m+11)+i; j, j-6, j-12, j-18, j-24\},\]
 \[\mbox{for } i=1, 2, \ldots, \frac{m}{2} \mbox{ and } j=(30m+41)-29i.\]

\[\textrm{Let } L=\{(30m+16), (30m+15),(30m+14), (15m+6)\}\]

Let $D=\{1, 2, \ldots, 15m+10\}$, and $D_1^*=\{6k | k=1, \ldots, \frac{5m}{2}\}$. Note that $D_1^*$ is empty if $P_2$ is empty, that is when $m=0$. The stars in $P_2$ exhaust the differences in $D_1^*$. Then the stars in $P_1$ exhaust the differences in $D \setminus (D^*_1)$.

Next we construct a set of $2m+1$ prime stars $P_3$ simply by always choosing the next available smallest vertex for the center, and the set of next available largest vertices for the leaves. In this case, the smallest possible vertex is $(15m+7)$, and the largest possible vertex is $(30m+13)$. So, the maximum possible length of a prime edge among these leftover vertices is $(15m+6)$ at any $m$. However, at this point, no prime edges have appeared yet. Hence, this process guarantees that all prime edges used in $P_3$ will have distinct forward differences from the set $\{(15m+6)', (15m+5)', \ldots, 1'\}$. Thus, $P_1 \cup P_2 \cup P_3$ partition $V \setminus \{(30m+16), (30m+15),(30m+14), (15m+6)\}$. Now let $L = \{(15m+6); (30m+16)', (30m+15)',(30m+14)'\}$ be the prime star $L$, which contains difference $\{(15m+10)',(15m+9)',(15m+8)' \}$.

In each case, we have constructed an almost 5-star factor with four isolated vertices and the desired properties.

\end{proof}

\section{Part I factors}

In this section, we use {\em almost 5-star factors} on a set of $v$ vertices to build $5$-star factors on a set of $6v$ vertices.

\begin{lemma} 
\label{Part I}
(Part I factors) If there exists an almost 5-star factor on $v$ vertices with $t$ isolated vertices, then there exists $v$ 5-star factors on $6v$ vertices.
\end{lemma}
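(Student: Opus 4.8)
The plan is to prove this by a difference-based blow-up. I identify the $6v$ vertices with $\mathbb{Z}_{6v}$ and regard each of the $v$ vertices $i\in\{0,1,\ldots,v-1\}$ of the almost $5$-star factor as a \emph{block} $B_i=\{6i,6i+1,\ldots,6i+5\}$, so that each block meets all six residue classes modulo $6$. The basic building block is a \emph{base star}: from a star $(c;\ell_1,\ldots,\ell_5)$ of the almost factor I select one vertex from each of the blocks $B_c,B_{\ell_1},\ldots,B_{\ell_5}$, choosing the representatives so that their residues modulo $6$ form a permutation of $\{0,\ldots,5\}$; I call this being \emph{transversal modulo $6$}. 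The key arithmetic fact I would record first is that the subgroup $\langle 6\rangle$ of $\mathbb{Z}_{6v}$ has order $v$ and acts freely, so the orbit of a transversal base star under $\langle 6\rangle$ consists of $v$ pairwise disjoint $5$-stars that tile $\mathbb{Z}_{6v}$, i.e.\ forms one spanning $5$-star factor; translating the base star by $0,1,\ldots,5$ before developing then yields \emph{six} spanning factors, each using a different orbit of each difference class it touches.

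Next I would carry out the count of factors. The $(v-t)/6$ ordinary stars of the almost factor each produce six spanning factors by the construction above, giving $v-t$ factors in total. The $t$ isolated vertices occupy $t$ whole blocks; since a single block $B_i$ is itself transversal modulo $6$, each such block supports a base star all of whose edges are short (their differences lie in $\{1,\ldots,5\}$), and developing each of these $t$ base stars once over $\langle 6\rangle$ supplies $t$ further spanning factors. This yields exactly $(v-t)+t=v$ factors, which is the assertion.

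The substance of the argument is edge-disjointness, and this is precisely what the hypotheses listed in Lemmas~\ref{t1}--\ref{t4} are arranged to deliver. Developing a base star with leaf-differences $D_1,\ldots,D_5$ over $\langle 6\rangle$ uses, for each $D_k$, exactly one $\langle 6\rangle$-orbit of the difference class $D_k$ and no other edges; the six translates then exhaust the six orbits of each $D_k$, so the six factors coming from one ordinary star are automatically pairwise edge-disjoint. Two \emph{different} base stars give edge-disjoint families as soon as their leaf-differences occupy distinct classes of $\mathbb{Z}_{6v}$. I would therefore send a forward difference $d$ of the almost factor to a class of $\mathbb{Z}_{6v}$ in the band around $6d$, using the residue (layer) assignments to separate the \emph{pure} occurrence $d$ from the \emph{prime} occurrence $d'$ of a repeated difference into two distinct classes. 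The hypothesis that each forward difference appears \emph{at most twice} guarantees that at most two demands fall in each band, while the \emph{at least once} hypothesis guarantees the bands are genuinely used, so the assignment of base differences to $\mathbb{Z}_{6v}$-classes can be made injective; edge-disjointness thereby reduces to a finite, checkable condition on residues modulo $6$.

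The step I expect to be the main obstacle is reconciling the $t$ short-difference factors coming from the isolated blocks with the factors coming from the ordinary stars. The isolated-block factors consume specific orbits of the small classes near $1,\ldots,5$, and these are exactly the classes into which the smallest ordinary differences (those near $6\cdot1$) can also spill; keeping the two families from demanding the same class or the same orbit is delicate rather than routine. This is where the explicit difference inventories of the Lemmas must be invoked in full: the lists recording that the edges of $P_1,P_2,P_3$ and the mixed star $M$ exhaust $D\setminus(D_0^{*}\cup D_1^{*})$, $D_1^{*}$, and $D_0^{*}$, together with the stipulation that the little star $L$ carries a prescribed set of prime differences, are precisely the data that pin down which short orbits the isolated blocks may use. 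Verifying that this accounting closes — that the pure/prime split produces $v$ pairwise edge-disjoint spanning factors with no class or orbit claimed twice — is the heart of the proof, and I would organize it by treating the even and odd parities of $m$ separately, following the case division already set up in Lemmas~\ref{t1}--\ref{t4}.
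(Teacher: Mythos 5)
Your handling of the isolated vertices is where the proposal breaks, and it breaks irreparably for $t\geq 2$. You claim that each of the $t$ isolated blocks supplies its own spanning factor by developing a within-block star over $\langle 6\rangle$. But the $\langle 6\rangle$-development of $(6x;6x+1,\ldots,6x+5)$ is the set of all stars $(6a;6a+1,\ldots,6a+5)$, $a\in\mathbb{Z}_v$, which does not depend on $x$: two isolated blocks whose stars are centered at the same residue generate the \emph{identical} factor. Choosing different center residues does not help either, since a within-block star centered at residue $r$ and one centered at residue $r'\neq r$ both contain the edge joining residues $r$ and $r'$ of their block; indeed any two $5$-stars in $K_6$ share an edge, so an edge-disjoint collection can contain at most \emph{one} factor made of within-block stars. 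Hence for $t\in\{2,3,4,5\}$ (all of which occur, since $t\equiv v \pmod 6$) your construction produces at most $v-t+1$ pairwise edge-disjoint factors, not $v$. This is exactly why the paper does not treat the exceptional blocks separately: it covers them inside a single base factor $B_0$ by \emph{cross-block} stars $s^{\star}_1,\ldots,s^{\star}_t$ (for example $(6x_1;6x_2+1,\ldots,6x_2+5)$), whose differences are exactly those of the prime \emph{little star} $L$ on the isolated vertices guaranteed by Lemmas~\ref{t2}--\ref{t5}, and then develops all of $B_0$ at once to get the $v$ factors.

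Two further problems remain even for the ordinary stars. First, by insisting that every base star be transversal modulo $6$, your factors never cover any edge of difference $\equiv 0\pmod 6$; the paper instead blows each pure star up into six within-class copies precisely so that the Part~I factors cover every difference class divisible by $6$, which is indispensable because the Part~II factors produced via Lemma~\ref{Part II} can only cover differences $\not\equiv 0\pmod 6$. So even if your construction yielded $v$ edge-disjoint factors, it could not be spliced into the rest of the paper: the balanced star arrays are built from the paper's specific blow-up (pure stars within classes, prime stars across classes), not from an arbitrary family of $v$ factors. Second, the injective assignment of forward differences to classes of $\mathbb{Z}_{6v}$, which you correctly identify as the heart of the edge-disjointness argument, is only announced, not carried out; and it is not routine, since class $6d-5$ coincides with class $6(d-1)+1$, and classes $D$ and $6v-D$ coincide, so differences near the top of the range (where $6d$ approaches $3v$) can collide under negation --- this is what the ``no wrap-around edges'' hypotheses in the paper's lemmas are there to control. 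As it stands, the proposal has a counting error that is fatal in four of the six congruence cases, and the central verification is deferred rather than done.
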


\begin{proof}

{\bf Case $t=1$:} Suppose $v=30m+7$ where $m \geq 1$. Let $F$ be the almost 5-star factor constructed in Lemma~\ref{t1} on  $\{0,1, \ldots, 30m+6\}$ with the one isolated vertex, $x$. We will form a set of base blocks on $V=\{0,1, \ldots, 180m+41\}$. Let $V = \bigcup_{i=0}^5 V_i$ where $V_i=\{v \in V | v\equiv i \pmod{6} \}$. Recall that $F$ consists of $3m$ pure stars, one mixed star with three pure edges and two prime edges,  and $2m$ prime stars. 

For each pure star $s=(c; l_1, l_2, l_3, l_4, l_5) \in F$, construct the star $s_i=(6c+i; 6l_1+i, 6l_2+i, 6l_3+i, 6l_4+i, 6l_5+i)$ for $i=0,1,2,3,4,5$. The mixed star has $3$ pure edges and $2$ prime edges. For the mixed star $m=(c; l_1, l_2, l_3, l_4', l_5')$, construct the stars $m_{i}=(6c+i; 6l_{1}+i, 6l_{2}+i, 6l_{3}+i, 6l_{4}'+((i+1) \pmod{6}), 6l_{5}'+((i+2) \pmod{6}))$ for $i=0,1,2,3,4,5$. For each prime star $p=(c'; l_1', l_2', l_3', l_4', l_5')$, construct the prime stars $p_i=(6c'; 6l_1'+((1+i) \pmod{6}), 6l_2'+((2+i) \pmod{6}), 6l_3'+((3+i) \pmod{6}), 6l_4'+((4+i) \pmod{6}), 6l_5'+((5+i) \pmod{6}))$ for $i=0,1,2,3,4,5$. 

Therefore, we have formed a set of $5$-stars that spans every vertex in $V$ except the vertices in the set $\{6x, 6x+1, 6x+2, 6x+3, 6x+4, 6x+5\}$. Create one more star, $s^{\star}=(6x; 6x+1, 6x+2, 6x+3, 6x+4, 6x+5)$. These base blocks give a factor of $5$-stars on $V$ called $B_0$. For $i=1,2,\dots, 30m+6$, let $B_i =\{s+i \pmod{30m+6} : s \in B \}$. Then, $\cup_{i=0}^{30m+6}B_i$ is a set of $30m+7$ $5$-star factors on $V$.\\

{\bf Case t=3:} Suppose $v=30m+27$ for any non-negative integer $m$. Let $F$ be the almost 5-star factor constructed in Lemma~\ref{t3} on  $\{0,1, \ldots, 30m+26\}$ with the little prime star of size $2$, $L$. Let $L=\{x_1;x_2,x_3 \}$ where $x_1 < x_2 < x_3$. We will form a set of base blocks on $V=\{0,1, \ldots, 180m+161\}$. Let $V = \bigcup_{i=0}^5 V_i$ where $V_i=\{v \in V | v\equiv i \pmod{6} \}$. Recall that $F$ consists of $3m+2$ pure stars, one mixed star with $3$ pure edges and $2$ prime edges, and $2m+1$ prime stars. 

For each pure star $s=(c; l_1, l_2, l_3, l_4, l_5) \in F$, construct the stars $s_i=(6c+i; 6l_1+i, 6l_2+i, 6l_3+i, 6l_4+i, 6l_5+i)$ for $i=0,1,2,3,4,5$. The mixed star has $3$ pure edges and $2$ prime edges. For the mixed stars $m=(c; l_1, l_2, l_3, l_4', l_5')$, construct the star $m_{i}=(6c+i; 6l_{1}+i, 6l_{2}+i, 6l_{3}+i, 6l_{4}'+((i+1) \pmod{6}), 6l_{5}'+((i+2) \pmod{6}))$ for $i=0,1,2,3,4,5$. For each prime star $p=(c'; l_1', l_2', l_3', l_4', l_5')$, construct the prime stars $p_i=(6c'; 6l_1'+((1+i) \pmod{6}), 6l_2'+((2+i) \pmod{6}), 6l_3'+((3+i) \pmod{6}), 6l_4'+((4+i) \pmod{6}), 6l_5'+((5+i) \pmod{6}))$ for $i=0,1,2,3,4,5$.

Therefore, we have formed a set of $5$-stars that spans every vertex in $V$ except the vertices in the set $\{6x, 6x+1, 6x+2, 6x+3, 6x+4, 6x+5\}$ for each $x  \in \{x_1,x_2,x_3\}$. So for the little prime star $L=\{x_1;x_2,x_3 \}$, we construct three more stars; 
$s^{\star}_1=(6x_1; 6x_2+1, 6x_2+2, 6x_2+3, 6x_2+4, 6x_2+5)$,
$s^{\star}_2=(6x_1+1; 6x_3, 6x_3+2, 6x_3+3, 6x_3+4, 6x_3+5)$,
$s^{\star}_3=(6x_1+2; 6x_1+3, 6x_1+4, 6x_1+5, 6x_2, 6x_3+1)$. These base blocks give a factor of $5$-stars on $V$ called $B_0$. For $i=1,2,\dots, 30m+26$, let $B_i =\{s+i \pmod{30m+26} : s \in B \}$. Then, $\cup_{i=0}^{30m+26}B_i$ is a set of $30m+27$ $5$-star factors on $V$.\\

{\bf Case t=5:} Suppose $v=30m+17$ where $m \geq 1$. Let $F$ be the almost 5-star factor constructed in Lemma~\ref{t5} on  $\{0,1, \ldots, 30m+16\}$ with the little prime star of size $4$, $L$. Let $L=\{x_1;x_2,x_3, x_4, x_5 \}$ where $x_1 < x_2 < x_3 < x_4 < x_5$. We will form a set of base blocks on $V=\{0,1, \ldots, 180m+101\}$. Let $V = \bigcup_{i=0}^5 V_i$ where $V_i=\{v \in V | v\equiv i \pmod{6} \}$. Recall that $F$ consists of $3m+1$ pure stars, one mixed star with $3$ pure edges and $2$ prime edges, and $2m+2$ prime stars if $m$ is odd, and $2m$ prime stars if $m$ is even.

For each pure star $s=(c; l_1, l_2, l_3, l_4, l_5) \in F$, construct the stars $s_i=(6c+i; 6l_1+i, 6l_2+i, 6l_3+i, 6l_4+i, 6l_5+i)$ for $i=0,1,2,3,4,5$. The mixed star has $3$ pure edges and $2$ prime edges. For the mixed stars $m=(c; l_1, l_2, l_3, l_4', l_5')$, construct the star $m_{i}=(6c+i; 6l_{1}+i, 6l_{2}+i, 6l_{3}+i, 6l_{4}'+((i+1) \pmod{6}), 6l_{5}'+((i+2) \pmod{6}))$ for $i=0,1,2,3,4,5$. For each prime star $p=(c'; l_1', l_2', l_3', l_4', l_5')$, construct the prime stars $p_i=(6c'; 6l_1'+((1+i) \pmod{6}), 6l_2'+((2+i) \pmod{6}), 6l_3'+((3+i) \pmod{6}), 6l_4'+((4+i) \pmod{6}), 6l_5'+((5+i) \pmod{6}))$ for $i=0,1,2,3,4,5$.

Therefore, we have formed a set of $5$-stars that spans every vertex in $V$ except the vertices in the set $\{6x, 6x+1, 6x+2, 6x+3, 6x+4, 6x+5\}$ for all $x  \in \{x_1,x_2,x_3,x_4,x_5\}$. So for the little prime star $L=\{x_1;x_2,x_3,x_4,x_5\}$, we construct five more stars; 
$s^{\star}_1=(6x_1; 6x_2+1, 6x_2+2, 6x_2+3, 6x_2+4, 6x_2+5)$,
$s^{\star}_2=(6x_1+1; 6x_3, 6x_3+2, 6x_3+3, 6x_3+4, 6x_3+5)$,
$s^{\star}_3=(6x_1+2; 6x_4, 6x_4+1, 6x_4+3, 6x_4+4, 6x_4+5)$. 
$s^{\star}_4=(6x_1+3; 6x_5, 6x_5+1, 6x_5+2, 6x_5+4, 6x_5+5)$. 
$s^{\star}_5=(6x_1+4; 6x_1+5, 6x_2, 6x_3+1, 6x_4+2, 6x_5+3)$. 

These base blocks give a factor of $5$-stars on $V$ called $B_0$. For $i=1,2,\dots,$ $30m+16$, let $B_i =\{s+i \pmod{30m+16} : s \in B \}$. Then, $\cup_{i=0}^{30m+16}B_i$ is a set of $30m+17$ $5$-star factors on $V$.\\

{\bf Case t=0:} Suppose $v=30m+12$ where $m \geq 0$. Let $F$ be the 5-star factor constructed in Lemma~\ref{t0} on  $\{0,1, \ldots, 30m+11\}$. We will form a set of base blocks on $V=\{0,1, \ldots, 180m+71\}$. Let $V = \bigcup_{i=0}^5 V_i$ where $V_i=\{v \in V | v\equiv i \pmod{6} \}$. Recall that $F$ consists of $3m+1$ pure stars and $2m+1$ prime stars. 

For each pure star $s=(c; l_1, l_2, l_3, l_4, l_5) \in F$, construct the star $s_i=(6c+i; 6l_1+i, 6l_2+i, 6l_3+i, 6l_4+i, 6l_5+i)$ for $i=0,1,2,3,4,5$. For each prime star $p=(c'; l_1', l_2', l_3', l_4', l_5')$, construct the prime stars $p_i=(6c'; 6l_1'+((1+i) \pmod{6}), 6l_2'+((2+i) \pmod{6}), 6l_3'+((3+i) \pmod{6}), 6l_4'+((4+i) \pmod{6}), 6l_5'+((5+i) \pmod{6}))$ for $i=0,1,2,3,4,5$.

Therefore, we have formed a set of $5$-stars that spans every vertex in $V$. These base blocks give a factor of $5$-stars on $V$ called $B_0$. For $i=1,2,\dots, 30m+11$, let $B_i =\{s+i \pmod{30m+11} : s \in B \}$. Then, $\cup_{i=0}^{30m+11}B_i$ is a set of $30m+12$ $5$-star factors on $V$.\\

{\bf Case t=2:} Suppose $v=30m+2$ for any $m\geq1$. Let $F$ be the almost 5-star factor constructed in Lemma~\ref{t3} on  $\{0,1, \ldots, 30m+1\}$ with the edge $L=\{x_1,x_2 \}$ where $x_1 < x_2$. We will form a set of base blocks on $V=\{0,1, \ldots, 180m+12\}$. Let $V = \bigcup_{i=0}^5 V_i$ where $V_i=\{v \in V | v\equiv i \pmod{6} \}$. Recall that $F$ consists of $3m+1$ pure stars and $2m$ prime stars. 

For each pure star $s=(c; l_1, l_2, l_3, l_4, l_5) \in F$, construct the stars $s_i=(6c+i; 6l_1+i, 6l_2+i, 6l_3+i, 6l_4+i, 6l_5+i)$ for $i=0,1,2,3,4,5$. For each prime star $p=(c'; l_1', l_2', l_3', l_4', l_5')$, construct the prime stars $p_i=(6c'; 6l_1'+((1+i) \pmod{6}), 6l_2'+((2+i) \pmod{6}), 6l_3'+((3+i) \pmod{6}), 6l_4'+((4+i) \pmod{6}), 6l_5'+((5+i) \pmod{6}))$ for $i=0,1,2,3,4,5$.

Therefore, we have formed a set of $5$-stars that spans every vertex in $V$ except the vertices in the set $\{6x, 6x+1, 6x+2, 6x+3, 6x+4, 6x+5\}$ for all $x  \in \{x_1,x_2 \}$. So for the edge $L=\{x_1,x_2\}$, we construct two more stars; 
$s^{\star}_1=(6x_1; 6x_2+1, 6x_2+2, 6x_2+3, 6x_2+4, 6x_2+5)$,
$s^{\star}_2=(6x_1+1; 6x_1+2, 6x_1+3, 6x_1+4, 6x_1+5, 6x_2+1)$. These base blocks give a factor of $5$-stars on $V$ called $B_0$. For $i=1,2,\dots, 30m+1$, let $B_i =\{s+i \pmod{30m+1} : s \in B \}$. Then, $\cup_{i=0}^{30m+1}B_i$ is a set of $30m+2$ $5$-star factors on $V$.\\

{\bf Case t=4:} Suppose $v=30m+22$ for any non negative integer $m$. Let $F$ be the almost 5-star factor constructed in Lemma~\ref{t3} on  $\{0,1, \ldots, 30m+21\}$ with the little prime star of size $3$, $L$. Let $L=\{x_1;x_2,x_3,x_4\}$ where $x_1 < x_2 < x_3 < x_4$. We will form a set of base blocks on $V=\{0,1, \ldots, 180m+131\}$. Let $V = \bigcup_{i=0}^5 V_i$ where $V_i=\{v \in V | v\equiv i \pmod{6} \}$. Recall that $F$ consists of $3m+2$ pure stars and $2m+1$ prime stars. 

For each pure star $s=(c; l_1, l_2, l_3, l_4, l_5) \in F$, construct the stars $s_i=(6c+i; 6l_1+i, 6l_2+i, 6l_3+i, 6l_4+i, 6l_5+i)$ for $i=0,1,2,3,4,5$. For each prime star $p=(c'; l_1', l_2', l_3', l_4', l_5')$, construct the prime stars $p_i=(6c'; 6l_1'+((1+i) \pmod{6}), 6l_2'+((2+i) \pmod{6}), 6l_3'+((3+i) \pmod{6}), 6l_4'+((4+i) \pmod{6}), 6l_5'+((5+i) \pmod{6}))$ for $i=0,1,2,3,4,5$.

Therefore, we have formed a set of $5$-stars that spans every vertex in $V$ except the vertices in the set $\{6x, 6x+1, 6x+2, 6x+3, 6x+4, 6x+5\}$ for all $x  \in \{x_1,x_2,x_3,x_4\}$. So for the little prime star $L=\{x_1;x_2,x_3,x_4 \}$, we construct four more stars; 
$s^{\star}_1=(6x_1; 6x_2+1, 6x_2+2, 6x_2+3, 6x_2+4, 6x_2+5)$,
$s^{\star}_2=(6x_1+1; 6x_3+2, 6x_3+3, 6x_3+4, 6x_3+5, 6x_3)$,
$s^{\star}_3=(6x_1+2; 6x_4+3, 6x_4+4, 6x_4+5, 6x_4, 6x_4+1)$,
$s^{\star}_4=(6x_1+3; 6x_1+4, 6x_1+5, 6x_2, 6x_3+1, 6x_4+2)$. These base blocks give a factor of $5$-stars on $V$ called $B_0$. For $i=1,2,\dots, 30m+21$, let $B_i =\{s+i \pmod{30m+21} : s \in B \}$. Then, $\cup_{i=0}^{30m+21}B_i$ is a set of $30m+22$ $5$-star factors on $V$.\\

\end{proof}

\section{Balanced Star Arrays}

In the graph $K_v$, for each difference $d$, there are $v$ edges with that difference. So when decomposing $K_v-I$ into $5$-star factors, we must ensure that for any difference $d$, each edge with difference $d$ appears exactly once in a star. To keep track of the differences that were used in the Part I factors and the differences we still need to cover to complete the decomposition, we will use a structure called a balanced star array for each set of vertices, $V_i$, $i=0,1,\dots, 5$.\\

A {\em balanced star array}, for a set of vertices $V$, is a $\lceil \frac{v-2}{12}\rceil \times 6$ array $T=T^{1} \cup T^{2}$ whose entries partition the set $D'$ where
$D=\{0,1, \ldots, \frac{v-2}{2}\}$ and $D'=D \setminus \{d \in D | d \equiv 0 \pmod{6}\}$, and satisfies the following properties:
\begin{itemize}
\item The columns are indexed by $\{1, 2, \ldots, 5\}$, and all entries in column $j$ are $\equiv j \pmod{(6)}$.
\item $T^{1}$ is a subarray of $T$ whose entries represent the differences covered by the Part I stars.  
\item One row of $T^1$ will contain $r$ empty cells, where $r$ is the remainder when $\frac{v-2}{2}$ is divided by 6.
\item $T^{2}$ is a subarray of $T$ with no empty cells.
\end{itemize}

    Each entry $d$ in $T$ represents all edges $\{u,v\}$, with difference $d$ such that $u<v$ and $u \in V$. The entries in $T^1$ are differences from $D'$ that have been covered in the {\em Part I} factors, and the entries in $T^2$ are differences that have not yet been covered. Note that none of the differences in $D'$ are congruent to $0 \pmod{6}$. This is because every edge with difference $d \equiv 0 \pmod{6}$ is contained in exactly one pure or mixed star. Thus, we are only concerned with the differences that are covered by prime edges. We will build the arrays so that each full row of $T^1$ corresponds to the set of $5$ differences covered by a particular {\em Part I} prime star. If $\frac{v-2}{2}$ is not divisible by $6$, then one row of $T^1$ will contain $r$ empty cells, where $r$ is the remainder when $\frac{v-2}{2}$ is divisible by $6$. The non-empty cells in this row corresponds to the prime edges in the mixed star from {\em Part I}.

\begin{lemma} 
\label{Part II}
(Part II factors) If there exists a balanced star array for each set $V_{i}, i \in \mathbb{Z}_{6}$, then there is a decomposition of $K_{v}-I$ into 5-star factors.
\end{lemma}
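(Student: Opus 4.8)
The plan is to keep the Part~I factors of Lemma~\ref{Part I} and to adjoin a second family, the \emph{Part~II} factors, obtained one per full row of the subarrays $T^2$; the decomposition will then follow from a counting argument showing that the two families partition the edge set of $K_v-I$. I would begin by recording what Part~I already contributes. Its factors use up every edge whose difference is $\equiv0\pmod 6$ --- such edges come only from pure and mixed edges, which is exactly why these differences are excluded from $D'$ --- together with every edge whose difference is entered in some $T^1$. By the definition of a balanced star array, for each residue class the cells of $T^1$ list precisely the prime differences realized by the Part~I prime stars (one full row per prime star, and the partial row for the prime edges of the mixed star), while the cells of $T^2$ list the prime differences still owed. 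So after Part~I the outstanding edges are exactly those catalogued by the $T^2$ subarrays.

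Next I would convert each full row of $T^2$ into a $5$-star factor. Such a row has entries $d_1,d_2,d_3,d_4,d_5$ with $d_j\equiv j\pmod 6$, since the column index fixes the residue, so $\{0,d_1,d_2,d_3,d_4,d_5\}$ is a complete residue system modulo $6$. Taking a base star $(c;\,c+d_1,c+d_2,c+d_3,c+d_4,c+d_5)$ and developing it in steps of $6$ then produces $v/6$ pairwise vertex-disjoint $5$-stars that together span $\mathbb{Z}_v$, i.e.\ a bona fide $5$-star factor, and this factor realizes each of the five differences $d_1,\dots,d_5$ exactly $v/6$ times. Running over every full row of all the $T^2$ subarrays yields the complete list of Part~II factors.

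It then remains to verify the partition. In each of the six arrays every difference $d\in D'$ occupies a single cell (the one in column $d\bmod 6$), whose edges are to be supplied by Part~I if that cell lies in $T^1$ and by the Part~II factor of its row if it lies in $T^2$; together with the differences $\equiv0\pmod 6$, which Part~I supplies in full, this assigns a source to every edge of difference at most $\tfrac{v-2}{2}$. The remaining difference $v/2$ is omitted precisely because $v/2>\max D=\tfrac{v-2}{2}$, and this omitted class is the deleted $1$-factor $I$ (since $\mathrm{Aut}(K_v)$ is transitive on $1$-factors, $I$ may be taken to be the difference-$v/2$ matching). Provided each assigned source delivers its edges exactly once, the Part~I and Part~II factors constitute the required decomposition of $K_v-I$.

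I expect this last proviso --- the edge-level accounting --- rather than the factor construction, to be the main obstacle. A factor developed cyclically in $\mathbb{Z}_v$ produces both forward and wrap-around edges, so the $d_j$-edges it supplies do not all share the residue class of their smaller endpoint; a single Part~II factor therefore contributes edges catalogued in several different arrays, and one must confirm that these contributions, together with the Part~I coverage recorded in the $T^1$ subarrays, meet every edge of $K_v-I$ exactly once with no overlap. Managing this forward-versus-wrap-around split across the six residue classes is exactly the purpose of the ``balanced'' requirement and of the pure/prime labelling introduced in Section~3, and it is where the substance of the proof lies.
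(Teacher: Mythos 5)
Your construction of the Part II factors is exactly the paper's: each full row $(d_1,\ldots,d_5)$ of $T_i^2$ yields a base star $(i;\,i+d_1,\ldots,i+d_5)$ developed in steps of $6$, and the condition $d_j\equiv j\pmod 6$ makes the $\frac{v}{6}$ translates disjoint and spanning. The genuine gap is that you stop precisely where the proof of this lemma actually lives: you declare the ``edge-level accounting'' to be the main obstacle and leave it unresolved, so what you have is a construction plus an unverified claim that Part I and Part II together decompose $K_v-I$, not a proof of the lemma.

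Moreover, the obstacle you flag largely dissolves once the edges are parametrized correctly, and this is how the paper closes the argument in a few lines. Index the $v$ edges of difference $d$ not by their smaller endpoint but by their base vertex: every such edge is $\{x,\,x+d \bmod v\}$ for a unique $x\in\mathbb{Z}_v$, so the six sets $E_i(d)=\{\{x,\,x+d\bmod v\}: x\in V_i\}$ partition the difference class $d$. The Part II factor built from the row of $T_i^2$ containing $d$ covers precisely $E_i(d)$: a wrap-around edge produced by the development is still based at a vertex of $V_i$, even though its smaller endpoint lies in another residue class, so no Part II factor ever ``contributes edges catalogued in several different arrays,'' and there is no cross-array bookkeeping to manage --- provided the array entries (both in $T^1$ and $T^2$) are read as recording these $V_i$-based orbits, which is what the arrays constructed in the subsequent lemmas actually do, since the Part I factors are likewise generated by cyclic development. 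With that reading, the count is immediate: $d$ occupies exactly one cell of each $T_i$; if that cell is in $T_i^1$ the edges of $E_i(d)$ were covered in Part I, and if it is in $T_i^2$ they are covered exactly once by the corresponding Part II factor; differences $\equiv 0\pmod 6$ are handled by the pure and mixed stars of Part I, and the difference-$\frac{v}{2}$ edges form the removed $1$-factor $I$. This short argument --- stated in the paper as ``each forward edge of difference $d_k$ on the vertices of $V_i$ has been covered exactly once by this $5$-star factor,'' followed by summing over $i$ --- is the substance you omitted.
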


\begin{proof}
Every edge with difference $d \equiv 0 \pmod{6}$ is contained in exactly one pure or mixed star from {\em Part I}. Therefore, we need only be concerned with ensuring that each edge with difference $d \not \equiv 0 \pmod{6}$ is contained in exactly one $5$-star. Let $V = \bigcup_{i=0}^5 V_i$, and let $T_{i}$ be the balanced star array for the set $V_{i}$. The differences in $T_{i}^{1}$ are covered by the factors given in Part I. For each row of the subarray $T_{i}^{2}$, we construct a 5-star factor as follows. Let the entries in the given row be $(d_1, d_2, d_3, d_4, d_5)$. Construct the base star to be $s=(i;i+d_1, i+d_2, i+d_3, i+d_4, i+d_5)$. We obtain $\frac{v-6}{6}$ more stars by taking $s+6j$ for $j=1,2, \ldots, \frac{v-6}{6}$. Because each $d_k \equiv k \pmod{6}$, for $k=1,2,3,4,5$,  we are guaranteed that these stars are disjoint and will span the set $V$. Furthermore, each forward edge of difference $d_k$ on the vertices of $V_{i}$ has been covered exactly once by this $5$-star factor. Because the balanced star array for $V_{i}$ partitions $D'$, we have exhausted all of edges $\{u,v\}$ with difference $d$ such that $u<v$ and $u \in V_i$. Because there is a balanced star array for each $V_{i}$, we have covered all edges of each difference. Thus we have decomposed $K_{v}-I$ into $5$-star factors.
\end{proof}

Next we build the needed balanced star arrays.\\

\begin{lemma} 
\label{1}
There is a balanced star array for each $V_{i}$, $i \in \mathbb{Z}_6$ when $v=180m+42$ with $m \geq 1 $.
\end{lemma}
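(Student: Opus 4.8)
The plan is to construct the six arrays $T_0,\dots,T_5$ explicitly from the Part~I data attached to this value of $v$. Since $v=180m+42=6(30m+7)$, the relevant Part~I factors are precisely those produced by the case $t=1$ of Lemma~\ref{Part I}, obtained by blowing up the almost $5$-star factor $F$ of Lemma~\ref{t1} on $30m+7$ vertices, together with the extra star $s^{\star}$ placed at the images of the isolated vertex. I would begin by recording the set to be partitioned, $D'=\{1,2,\dots,90m+20\}\setminus\{d:d\equiv 0\pmod 6\}$, whose residue classes $1,2,3,4,5$ modulo $6$ have sizes $15m+4,\,15m+4,\,15m+3,\,15m+3,\,15m+3$; each $T_i$ is then a $(15m+4)\times 5$ array.

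Next I would determine $T_i^{1}$ directly from the blow-up. Each of the $2m$ prime stars $p=(c';l_1',\dots,l_5')$ of $F$ yields, in $T_i$, one full row whose residue-$k$ entry is $6(l_k'-c')+k$ if $k+i\le 5$ and $6(l_k'-c')+k-6$ if $k+i\ge 6$; in either case the entry lies in residue class $k$, so the row contributes exactly one difference to each column. The mixed star of $F$ supplies the single partial row: its two prime edges become the residue-$1$ and residue-$2$ entries, leaving the residue-$3$, $4$, $5$ cells of that row empty. For $i=0$ only, the star $s^{\star}$ adds one further full row, namely the differences $1,2,3,4,5$. This pins down the Part~I-covered set $S_i\subseteq D'$ exactly.

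It then remains to fill $T_i^{2}=D'\setminus S_i$ so that each of its rows again carries one difference of each residue. The key check is a count: for $i\neq 0$ the set $S_i$ meets residue classes $1,\dots,5$ in $2m+1,\,2m+1,\,2m,\,2m,\,2m$ elements, so $D'\setminus S_i$ has exactly $13m+3$ elements in every class, while for $i=0$ the $s^{\star}$ row lowers each count to $13m+2$. Because the five residue classes are pairwise disjoint, once these class sizes are equal the survivors can be packed into complete rows in the obvious way---list each class in increasing order and let row $t$ consist of the $t$-th smallest survivor from each class---and this automatically meets the column-residue requirement and uses every difference of $D'$ exactly once. I would finish by reading off that $T^{1}\cup T^{2}$ has all the properties demanded of a balanced star array, after which Lemma~\ref{Part II} assembles the full decomposition.

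The main obstacle is not the final assembly, which is routine once the class sizes agree, but the bookkeeping that produces $S_i$ correctly. Because the blow-up shifts the $k$-th leaf by $((k+i)\bmod 6)$, the residue-$k$ difference descending from a fixed small prime difference $d'$ equals $6d'+k$ for some residues $i$ and $6(d'-1)+k$ for the others, so the six arrays are genuinely different and must be treated one at a time; this compounds with the split of Lemma~\ref{t1} into the cases $m$ odd and $m$ even and with the exceptional role of $V_0$. The heart of the argument is to verify, in each of these sub-cases, that the differences actually consumed by Part~I leave every residue class of $D'$ with the same number of survivors, since that equality is exactly what allows $T^{2}$ to break into complete $5$-star rows.
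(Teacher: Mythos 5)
Your proposal is correct and takes essentially the same route as the paper's own proof: you build $T_i^{1}$ from the blown-up prime stars, the partial row from the mixed star, and (for $V_0$ only) the row $[1,2,3,4,5]$ from $s^{\star}$, then verify that every residue class of $D'$ has the same number of surviving differences ($13m+2$ for $V_0$, $13m+3$ for $i\neq 0$) and pack the survivors into full rows of $T_i^{2}$, exactly as in Lemma~\ref{1}. Your extra details---the explicit wrap-around rule $6(l_k'-c')+k$ versus $6(l_k'-c')+k-6$ and the ``$t$-th smallest survivor per class'' packing rule---only make explicit what the paper asserts implicitly.
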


\begin{proof}
We first build the array for $V_{0}$. We build the rows in $T_{0}^{1}$ based on the stars that are given in Lemma~\ref{Part I}, case $t=1$. Let one row be $[1,2,3,4,5]$, corresponding to 
the forward differences on $V_{0}$ from the star $s^{\star}$. Each prime star $p$, produces the row $[d_1,d_2,d_3,d_4,d_5]$, where $d_j=6l_{j}'+j-6c$ for $j=1,2,3,4,5$. Note 
that $d_{j} \equiv j \pmod{6}$. For the mixed star $m$, the differences covered are: $6l_1-6c$, $6l_2-6c$, $6l_3-6c$, $6l_4'+1-6c$, and $6l_5'+2-6c$. Thus the mixed star produces the row $[d_1, d_2, \phi, \phi, \phi]$ corresponding to $d_1=6l_4'+1-6c$ and $d_2=6l_5'+2-6c$, in which $d_j \equiv j \pmod{6}$ for $j=1,2$. 
Because $|D'|=75m+17 \equiv 2 \pmod{5}$, it follows that there are three sets of $15m+3$ differences, where the differences in these sets are all equivalent to $j\pmod{6}$, for $j=3,4,5$, and there are 
two sets of $15m+4$ differences, where the differences in these sets all equivalent to $j \pmod{6}$ for $j=1,2$. The differences from $D'$ that are covered by the Part I stars 
are such that there are three sets of $2m+1$ differences, where the differences in these sets are all equivalent to $j \pmod{6}$ for $j=3,4,5$. Also, there are two sets of 
$2m+2$ differences, where the differences in these sets all equivalent to $j \pmod{6}$, for $j=1,2$. This leaves five sets of $13m+2$ differences, where the differences in each set are all equivalent to $j \pmod{5}$ for $j=1,2,3,4,5$. Therefore, these remaining differences have the property that they can be partitioned into the $(13m+2) \times 5$ subarray, $T_{0}^{2}$.

Now for $i=1,2,3,4,5$, we build the balanced star array for $V_{i}$ as follows.  Beginning with $T_{i}^{1}$, for each prime star $p_{i}$, the differences covered gives the row $[d_1,d_2,d_3,d_4,d_5]$ where 

\begin{align*}
    d_{j}=&6l_{j}'+(i+j)-(6c+i)\pmod{6}\\
    \equiv& j \pmod{6}.
\end{align*}

For the mixed star $m_{i}$, the differences covered are: $6(l_1-c), 6(l_2-c), 6(l_3-c), 6l_4'+(i+1)-(6c+i), 6l_5'+(i+2)-(6c+i)$.  Create the row $[d_1,d_2,\phi,\phi,\phi]$, corresponding to $d_1=6(l_4'-c)+1$ and $d_2=6(l_5'-c)+2$, in which $d_j \equiv j \pmod{6}$ for $j=1,2$. This accounts for three sets of $2m$ differences, in which the differences in each set are equivalent to $j\pmod{6}$, for $j=3,4,5$, and two sets of $2m+1$ differences, in which the differences in each set are equivalent to $j \pmod{6}$, for $j=1,2$. This leaves a total of $65m+15$ differences which can be partitioned into the $(13m+3) \times 5$ subarray, $T_{i}^{2}$. 

\end{proof}


\begin{lemma} 
\label{5}
There is a balanced star array for each $V_{i}$, $i \in \mathbb{Z}_6$ when  $v=180m+102$ with $m \geq 1 $.
\end{lemma}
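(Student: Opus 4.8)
The plan is to follow the proof of Lemma~\ref{1} step for step, substituting the data for $t=5$ in place of the data for $t=1$. First I would record the bookkeeping for $v=180m+102$: since $\frac{v-2}{2}=90m+50\equiv 2\pmod 6$, the set $D'$ contains $15m+9$ differences in each of the residue classes $1,2$ and $15m+8$ in each of $3,4,5$, so $|D'|=75m+42$ and the target array has $\lceil\frac{v-2}{12}\rceil=15m+9$ rows and five columns. The factors to account for are those of Lemma~\ref{Part I}, case $t=5$, built on the almost $5$-star factor of Lemma~\ref{t5}: $3m+1$ pure stars (covering only multiples of $6$, hence invisible to $D'$), one mixed star carrying two prime edges, $2m$ prime stars, and the five auxiliary stars $s^{\star}_1,\dots,s^{\star}_5$ that resolve the little prime star $L=\{x_1;x_2,x_3,x_4,x_5\}$ of size $4$.

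Next I would tabulate, for each $V_i$, the prime differences covered by Part~I. Exactly as in Lemma~\ref{1}, each of the $2m$ prime stars contributes one entry to every column, the lifted mixed star $m_i$ contributes one entry to columns $1$ and $2$, and each auxiliary star contributes one entry to every column of whichever array it feeds. The one genuinely new computation concerns $L$: I would check that for $k=1,\dots,5$ all five edges of $s^{\star}_k$ have their smaller endpoint in a single residue class, so that $s^{\star}_k$ feeds precisely the array of $V_{k-1}$, and that its five differences realise the residues $1,\dots,5$ once each. Hence $L$ adds one entry per column to the arrays of $V_0,V_1,V_2,V_3,V_4$ and nothing at all to $V_5$.

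Summing the contributions, Part~I covers on each of $V_0,\dots,V_4$ exactly $2m+2$ differences in residues $1,2$ and $2m+1$ in residues $3,4,5$, while on $V_5$ it covers $2m+1$ in residues $1,2$ and $2m$ in residues $3,4,5$. Subtracting from the $D'$ profile leaves, in every column, $13m+7$ differences for each of $V_0,\dots,V_4$ and $13m+8$ for $V_5$. Because these counts are constant across the five columns, the leftover differences fill a complete $(13m+7)\times 5$ (respectively $(13m+8)\times 5$) subarray $T_i^{2}$ with column $j$ holding the residue-$j$ entries; prefixing the rows of $T_i^{1}$---one row per prime star, the partial mixed-star row, and for $V_0,\dots,V_4$ a single auxiliary-star row---produces a balanced star array with $15m+9$ rows, as confirmed by $(2m+2)+(13m+7)=15m+9$ and $(2m+1)+(13m+8)=15m+9$. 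The count is identical for $m$ odd and $m$ even, since Lemma~\ref{t5} furnishes the same combinatorial data ($2m$ prime stars, one mixed star, one size-$4$ little star) in both parities.

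I expect the main obstacle to be precisely the asymmetric effect of the size-$4$ little star. In the $t=1$ case a single $s^{\star}$ touches only $V_0$, so the six arrays are nearly identical; here $s^{\star}_1,\dots,s^{\star}_5$ spread over five distinct arrays, and one must verify both that their differences fall into the intended columns (so that no column is overfilled) and that the resulting shortfall in $V_5$ is compensated exactly by the larger residue classes $\{1,2\}$, forcing all five leftover column heights to agree. The second delicate point is distinctness: the cardinality count yields a genuine partition of $D'$ only if the big-graph differences produced by the prime stars, by the mixed star, and by the five auxiliary stars are pairwise distinct within each $V_i$. I would establish this from the ``each forward difference appears at most twice'' property of Lemma~\ref{t5} together with the injectivity of the sixfold lifting used in Lemma~\ref{Part I}.
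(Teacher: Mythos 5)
Your proposal follows the same route as the paper's own proof: build $T_i^{1}$ from one row per lifted prime star, the two-entry mixed-star row, and (for $i=0,\dots,4$) one row from the auxiliary star $s^{\star}_{i+1}$; then count residue classes of $D'$ and let $T_i^{2}$ absorb the complement. Your arithmetic---$2m+2$ covered differences in columns $1,2$ and $2m+1$ in columns $3,4,5$ for each of $V_0,\dots,V_4$, leaving a $(13m+7)\times 5$ subarray, versus $2m+1$ and $2m$ for $V_5$, leaving $(13m+8)\times 5$, with $(2m+2)+(13m+7)=(2m+1)+(13m+8)=15m+9=\lceil\frac{v-2}{12}\rceil$---is correct and coincides with the paper's even-$m$ computation. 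The one place you diverge from the paper is the parity split: the paper, relying on the statement in Lemma~\ref{Part I} (case $t=5$) that $F$ has $2m+2$ prime stars when $m$ is odd, reports odd-$m$ counts of $2m+3$ and $2m+4$ and a leftover of $65m+25$ differences. Your uniform count of $2m$ prime stars is what Lemma~\ref{t5} actually constructs in both parities (a vertex count confirms it: $6(3m+1)+6+6\cdot 2m+5=30m+17$), and the paper's odd-$m$ figures are internally inconsistent, since a leftover of $65m+25$ cannot fill the $(13m+7)\times 5$ subarray the paper then claims. So your parity-free version is, in effect, the corrected form of the paper's argument; your closing remarks on column placement of the auxiliary-star differences and on distinctness of the $T^{1}$ entries address exactly the points the paper leaves implicit.
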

\begin{proof}
    We first build the array for $V_{i}$ where $i=0,1,2,3,4$. We build the rows in $T_{i}^{1}$ based on the stars that are given in Lemma~\ref{Part I}, case $t=5$. Let one row be $[s_1 -s_0,s_2 -s_0,s_3 -s_0,s_4 -s_0,s_5 -s_0]$, corresponding to 
the forward differences on $V_{i}$ from the star $s^{\star}_{i+1}=(s_0 ; s_1,s_2,s_3,s_4,s_5)$. Each prime star $p$, produces the row $[d_1,d_2,d_3,d_4,d_5]$, where $d_j=6l_{j}'+j-6c$ for $j=1,2,3,4,5$. Note 
that $d_{j} \equiv j \pmod{6}$. For the mixed star $m$, the differences covered are: $6l_1-6c$, $6l_2-6c$, $6l_3-6c$, $6l_4'+1-6c$, and $6l_5'+2-6c$. Thus the mixed star produces the row $[d_1, d_2, \phi, \phi, \phi]$ corresponding to $d_1=6l_4'+1-6c$ and $d_2=6l_5'+2-6c$, in which $d_j \equiv j \pmod{6}$ for $j=1,2$. Because $|D'|=75m+42 \equiv 2 \pmod{5}$,  it means that there are three sets of $15m+8$ differences, where the differences in these sets are all equivalent to $j\pmod{6}$, for $j=3,4,5$, and there are two sets of $15m+9$ differences, where the differences in these sets all equivalent to $j \pmod{6}$ for $j=1,2$. The differences from $D'$ that are covered by the Part I stars are such that there are three sets of $2m+3$ differences if $m$ is odd and $2m+1$ differences if $m$ is even, where the differences in these sets are all equivalent to $j \pmod{6}$ for $j=3,4,5$. Also, there are two sets of $2m+4$ differences if $m$ is odd and $2m+2$ differences if $m$ is even, where the differences in these sets all equivalent to $j \pmod{6}$, for $j=1,2$. This leaves five sets of $65m+25$ differences when $m$ is odd and $65m+35$ differences when $m$ is even, where the differences in each set are all equivalent to $j \pmod{5}$ for $j=1,2,3,4,5$. Therefore, these remaining differences have the property that they can be partitioned into the $(13m+7) \times 5$ subarray, $T_{i}^{2}$.

Now for $i=5$, we build the balanced star array for $V_{i}$ as follows.  Beginning with $T_{i}^{1}$, for each prime star $p_{i}$, the differences covered gives the row $[d_1,d_2,d_3,d_4,d_5]$ where 

\begin{align*}
    d_{j}=&6l_{j}'+(i+j)-(6c+i)\pmod{6}\\
    \equiv& j \pmod{6}.
\end{align*}

For the mixed star $m_{i}$, the differences covered are: $6(l_1-c), 6(l_2-c), 6(l_3-c), 6l_4'+(i+1)-(6c+i), 6l_5'+(i+2)-(6c+i)$.  Create the row $[d_1,d_2,\phi,\phi,\phi]$, corresponding to $d_1=6(l_4'-c)+1$ and $d_2=6(l_5'-c)+2$, in which $d_j \equiv j \pmod{6}$ for $j=1,2$. 

If $m$ is odd, this accounts for three sets of $2m+2$ differences in which the differences in each set are equivalent to $j\pmod{6}$ for $j=3,4,5$, and two sets of $2m+3$ differences in which the differences in each set are equivalent to $j \pmod{6}$, for $j=1,2$. This leaves a total of $65m+30$ differences if $m$ is odd which can be partitioned into the $(13m+6) \times 5$ subarray, $T_{i}^{2}$. 

If $m$ is even, this accounts for three sets of $2m$ differences in which the differences in each set are equivalent to $j\pmod{6}$ for $j=3,4,5$, and two sets of $2m+1$ differences in which the differences in each set are equivalent to $j \pmod{6}$, for $j=1,2$. This leaves a total of $65m+40$ differences if $m$ is even which can be partitioned into the $(13m+8) \times 5$ subarray, $T_{i}^{2}$. 
\end{proof}


\begin{lemma}
\label{3}
There is a balanced star array for each $V_{i}$, $i \in \mathbb{Z}_6$ when $v=180m+162$  with $m \geq 0 $.
\end{lemma}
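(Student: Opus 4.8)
The plan is to follow the same template used in Lemmas~\ref{1} and \ref{5}, since $v=180m+162$ means the underlying almost $5$-star factor is the $t=3$ construction from Lemma~\ref{t3} on $30m+27$ vertices, lifted by the $t=3$ case of Lemma~\ref{Part I}. First I would fix the parameters: here $\frac{v-2}{2}=90m+80$, so $D=\{0,1,\dots,90m+80\}$ and $D'$ excludes the multiples of $6$; the array has $\lceil\frac{v-2}{12}\rceil=\lceil\frac{90m+80}{12}\rceil$ rows, and the remainder $r$ of $90m+80$ modulo $6$ is $2$, confirming that exactly one row of $T^1$ carries the two prime edges of the mixed star and the rest of its cells are empty. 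This is the book-keeping that sets up the whole argument.

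Next I would split into the two cases $i\in\{0,1,2,3,4\}$ and $i=5$, exactly as in Lemma~\ref{5}. For the first group, the star $s^\star_{i+1}$ built from the little prime star $L$ of size $2$ in the $t=3$ Part~I construction contributes one row $[s_1-s_0,\dots,s_5-s_0]$ with $d_j\equiv j\pmod 6$; each Part~I prime star $p$ contributes a full row via $d_j=6l_j'+j-6c$; and the mixed star $m$ contributes the partial row $[d_1,d_2,\phi,\phi,\phi]$ with $d_1=6l_4'+1-6c$, $d_2=6l_5'+2-6c$. For $i=5$ the center shift changes the reductions to $d_j=6l_j'+(i+j)-(6c+i)\equiv j\pmod 6$, and one verifies the mixed-star row is again $[d_1,d_2,\phi,\phi,\phi]$. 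In each case I would total, per residue class $j\pmod 6$, how many differences in $D'$ there are ($|D'|=75m+67$ here, split into three classes of one size and two of another according to $|D'|\pmod 5$), subtract the differences consumed by the Part~I prime and mixed stars (counting the $2m+1$ prime stars and the one prime star $L$), and check that what remains splits evenly into five equal columns, so the leftover differences fill a rectangular $T^2_i$ with no empty cells.

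The main obstacle is purely arithmetic bookkeeping rather than any structural difficulty: I must confirm that after removing the Part~I contributions, the surviving differences in each of the five residue classes $1,2,3,4,5\pmod 6$ are equal in number, so that $T^2_i$ is a genuine rectangle. This requires handling the parity of $m$ separately (as in Lemma~\ref{5}, the count of prime stars and hence the size of $T^1_i$ differs between $m$ odd and $m$ even because Lemma~\ref{t3} produces different $P_2,P_3$ sets in the two parities), and verifying that the $t=3$ Part~I factor's prime-edge differences together with the two mixed-edge differences and the little-star differences $\{(15m+12)',(15m+11)'\}$ (for $m$ odd) or $\{(15m+8)',(15m+13)'\}$ (for $m$ even) exhaust a consistent set. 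The key consistency check is that every forward difference in $\{1,\dots,\frac{30m+26}{2}\}$ that is not $\equiv 0\pmod 6$ is accounted for exactly once across $T^1_i$, after lifting by the factor of $6$; this is guaranteed by the "appears at least once / no more than twice" properties proved in Lemma~\ref{t3}, which is why that lemma was stated with those precise bounds.

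Once the row/column counts match for every $i\in\mathbb{Z}_6$ and both parities of $m$, the construction of each $T_i=T_i^1\cup T_i^2$ is complete, and invoking Lemma~\ref{Part II} finishes the decomposition of $K_v-I$ into $5$-star factors. I would therefore organize the write-up as: reduce to the $t=3$ data, produce the explicit rows of $T_i^1$ for $i\le 4$ and for $i=5$, tabulate the residue-class counts in $D'$ and in $T_i^1$ for each parity of $m$, and conclude that the difference of these counts partitions into equal columns, yielding $T_i^2$.
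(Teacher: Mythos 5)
There is a genuine gap: you imported the case split from Lemma~\ref{5} ($i\in\{0,1,2,3,4\}$ versus $i=5$), but that split is tied to the $t=5$ construction, where the little prime star has five vertices and the Part~I lift creates \emph{five} extra stars $s^{\star}_1,\dots,s^{\star}_5$ with centers in $V_0,\dots,V_4$. In the present case $v=180m+162$ the underlying factor is the $t=3$ construction, whose little star $L=\{x_1;x_2,x_3\}$ has only three vertices, and the Part~I lift creates only \emph{three} extra stars $s^{\star}_1,s^{\star}_2,s^{\star}_3$, centered at $6x_1,6x_1+1,6x_1+2$, i.e.\ in $V_0,V_1,V_2$ only. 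Consequently the correct division, which is what the paper does, is $i\in\{0,1,2\}$ (where $T_i^1$ gains one extra full row from $s^{\star}_{i+1}$) versus $i\in\{3,4,5\}$ (where it does not). Under your split, $T_3^1$ and $T_4^1$ would each contain a full row of differences attributed to nonexistent stars $s^{\star}_4,s^{\star}_5$; those differences are not covered by any Part~I factor, so they would be omitted from $T_i^2$ and the corresponding edges would never be covered --- the resulting object fails to decompose $K_v-I$. The correct counts are: for $i\in\{0,1,2\}$, Part~I covers three classes of $2m+2$ differences ($j=3,4,5$) and two of $2m+3$ ($j=1,2$), leaving a $(13m+11)\times 5$ array $T_i^2$; for $i\in\{3,4,5\}$, it covers three classes of $2m+1$ and two of $2m+2$, leaving a $(13m+12)\times 5$ array.

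A secondary, smaller issue: the parity split on $m$ that you insist on is not needed here. Unlike Lemma~\ref{t5}, where the number of prime stars genuinely differs between $m$ odd and $m$ even, Lemma~\ref{t3} produces $2m+1$ prime stars (plus the one mixed star and the little star) for \emph{both} parities; the sets $P_2$ differ in form between the parities, but those are pure stars and contribute nothing to the balanced star arrays. So the bookkeeping in this lemma is uniform in $m$, and introducing a parity case distinction only obscures the count.
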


\begin{proof}
    We first build the array for $V_{i}$ where $i=0,1,2$. We build the rows in $T_{i}^{1}$ based on the stars that are given in Lemma~\ref{Part I}, Case $t=3$.  
    
 Let one row be $[s_1 -s_0,s_2 -s_0,s_3 -s_0,s_4 -s_0,s_5 -s_0]$, corresponding to 
the forward differences on $V_{i}$ from the star $s^{\star}_{i+1}=(s_0 ; s_1,s_2,s_3,s_4,s_5)$. Each prime star $p$, produces the row $[d_1,d_2,d_3,d_4,d_5]$, where $d_j=6l_{j}'+j-6c$ for $j=1,2,3,4,5$. Note 
that $d_{j} \equiv j \pmod{6}$. For the mixed star $m$, the differences covered are: $6l_1-6c$, $6l_2-6c$, $6l_3-6c$, $6l_4'+1-6c$, and $6l_5'+2-6c$. Thus the mixed star produces the row $[d_1, d_2, \phi, \phi, \phi]$ corresponding to $d_1=6l_4'+1-6c$ and $d_2=6l_5'+2-6c$, in which $d_j \equiv j \pmod{6}$ for $j=1,2$. Because $|D'|=75m+67 \equiv 2 \pmod{5}$,  it means that there are three sets of $15m+13$ differences, where the differences in these sets are all equivalent to $j\pmod{6}$, for $j=3,4,5$, and there are 
two sets of $15m+14$ differences, where the differences in these sets all equivalent to $j \pmod{6}$ for $j=1,2$. The differences from $D'$ that are covered by the Part I stars are such that there are three sets of $2m+2$ differences, where the differences in these sets are all equivalent to $j \pmod{6}$ for $j=3,4,5$. Also, there are two sets of 
$2m+3$ differences, where the differences in these sets all equivalent to $j \pmod{6}$, for $j=1,2$. This leaves five sets of $13m+11$ differences, where the differences in each set are all equivalent to $j \pmod{5}$ for $j=1,2,3,4,5$. Therefore, these remaining differences have the property that they can be partitioned into the $(13m+11) \times 5$ subarray, $T_{0}^{2}$.

Now for $i=3,4,5$, we build the balanced star array for $V_{i}$ as follows.  Beginning with $T_{i}^{1}$, for each prime star $p_{i}$, the differences covered gives the row $[d_1,d_2,d_3,d_4,d_5]$ where 

\begin{align*}
    d_{j}=&6l_{j}'+(i+j)-(6c+i)\pmod{6}\\
    \equiv& j \pmod{6}.
\end{align*}

For the mixed star $m_{i}$, the differences covered are: $6(l_1-c), 6(l_2-c), 6(l_3-c), 6l_4'+(i+1)-(6c+i), 6l_5'+(i+2)-(6c+i)$.  Create the row $[d_1,d_2,\phi,\phi,\phi]$, corresponding to $d_1=6(l_4'-c)+1$ and $d_2=6(l_5'-c)+2$, in which $d_j \equiv j \pmod{6}$ for $j=1,2$. This accounts for three sets of $2m+1$ differences, in which the differences in each set are equivalent to $j\pmod{6}$, for $j=3,4,5$, and two sets of $2m+2$ differences, in which the differences in each set are equivalent to $j \pmod{6}$, for $j=1,2$. This leaves a total of $65m+60$ differences which can be partitioned into the $(13m+12) \times 5$ subarray, $T_{i}^{2}$. 

\end{proof}

\begin{lemma} 
\label{0}
There is a balanced star array for each $V_{i}$, $i \in \mathbb{Z}_6$ when $v=180m+72$ with $m \geq 0$.
\end{lemma}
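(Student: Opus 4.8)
The plan is to follow the same column-counting strategy used in Lemmas~\ref{1}, \ref{5}, and~\ref{3}: for each set $V_i$ I will tabulate, residue class by residue class modulo $6$, how many differences of $D'$ are consumed by the Part~I factors (these fill $T_i^1$) and then show that the surviving differences split into five columns of equal height, which assemble into $T_i^2$. The feature that makes this the cleanest of the four cases is that the $t=0$ almost $5$-star factor of Lemma~\ref{t0} has \emph{no mixed star and no little star}; accordingly the Part~I construction of Lemma~\ref{Part I}, case $t=0$, introduces no extra $s^{\star}$ star and no partial row. As the count below shows, this forces $|D'|$ to be divisible by $5$ and every column of $D'$ to have the same height, so that all six arrays $T_0,\dots,T_5$ can be produced by one uniform argument rather than split into the $V_0$ versus $V_i$ cases seen earlier.

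First I would record the data of the underlying factor: for $v=180m+72$ the almost $5$-star factor $F$ on $\{0,\dots,30m+11\}$ consists of $3m+1$ pure stars and $2m+1$ prime stars. With $D=\{0,1,\dots,90m+35\}$ and $D'=D\setminus\{d : d\equiv 0 \pmod 6\}$, a direct count gives $|D'|=75m+30$, with exactly $15m+6$ differences in each residue class $j\in\{1,2,3,4,5\}$ modulo $6$. To fill $T_i^1$: the lifted pure stars cover precisely the differences $\equiv 0\pmod 6$, none of which lie in $D'$, so they record nothing; each of the $2m+1$ prime stars $p=(c';l_1',\dots,l_5')$ contributes exactly one full row, through $d_j=6l_j'+j-6c'$ for $V_0$ and its wrapped analogue (as in Lemma~\ref{1}) for $V_i$ with $i\neq 0$, with $d_j\equiv j\pmod 6$ in every case. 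Hence $T_i^1$ places $2m+1$ entries in each of the five columns, and subtracting leaves $15m+6-(2m+1)=13m+5$ survivors per column. These fit into a full $(13m+5)\times 5$ subarray $T_i^2$ with no empty cells, and the row total $(2m+1)+(13m+5)=15m+6=\lceil (v-2)/12\rceil$ matches the prescribed size. Since there is no mixed star, $T_i^1$ is itself a full $(2m+1)\times 5$ rectangle; this is the only one of the four lemmas in which the whole array contains no empty cell, reflecting that here $|D'|$ is a multiple of $5$, and the same bookkeeping applies verbatim to every $i\in\mathbb{Z}_6$.

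The only step requiring genuine care is confirming that the $5(2m+1)=10m+5$ differences recorded in $T_i^1$ are distinct elements of $D'$, so that $T_i^2=D'\setminus T_i^1$ is well defined and splits evenly. I would deduce this from Lemma~\ref{t0}: the prime edges of $F$ have pairwise distinct forward differences, and within a fixed residue class modulo $6$ the map $\delta\mapsto 6\delta+(\text{column offset})$ is injective, so no two prime edges can yield the same big difference. The wrapping that distinguishes $V_i$ ($i\neq 0$) from $V_0$ changes a column offset from $j$ to $j-6$ but preserves the residue modulo $6$, hence the column, which is exactly why the column totals---and therefore the shape of $T_i^2$---are identical for all $V_i$. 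Because the largest prime forward difference in $F$ is $15m+5$, every recorded value $6\delta+(\text{offset})$ lies in $\{1,\dots,90m+35\}$, so the $T_i^1$ entries are genuinely inside $D'$. Once distinctness and the equal column split are established, each row of $T_i^2$ automatically satisfies the sole hypothesis $d_k\equiv k\pmod 6$ required by Lemma~\ref{Part II}, giving a balanced star array for every $V_i$ and completing the proof.
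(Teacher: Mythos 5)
Your proposal is correct and takes essentially the same approach as the paper's proof: the $2m+1$ prime stars contribute full rows to $T_i^{1}$ with entries $d_j \equiv j \pmod{6}$, and the count $|D'|=75m+30$, split as five columns of $15m+6$ minus $2m+1$ covered entries per column, leaves exactly the full $(13m+5)\times 5$ subarray $T_i^{2}$, uniformly in $i$. The only divergence is that the paper's text for $V_0$ also mentions a row $[1,2,3,4,5]$ coming from a star $s^{\star}$ (an apparent carry-over from Lemma~\ref{1}, since case $t=0$ of Lemma~\ref{Part I} constructs no $s^{\star}$), yet its arithmetic ($2m+1$ covered and $13m+5$ remaining per column) agrees with yours, so your omission of that row is if anything the more accurate reading.
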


\begin{proof}
We first build the array for $V_{0}$. We build the rows in $T_{0}^{1}$ based on the stars that are given in Lemma~\ref{Part I}, case $t=0$. Let one row be $[1,2,3,4,5]$, corresponding to 
the forward differences on $V_{0}$ from the star $s^{\star}$. Each prime star $p$, produces the row $[d_1,d_2,d_3,d_4,d_5]$, where $d_j=6l_{j}'+j-6c$ for $j=1,2,3,4,5$. Note 
that $d_{j} \equiv j \pmod{6}$. Here we have $|D'|=75m+30 \equiv 0 \pmod{5}$, so it follows that there are five sets of $15m+6$ differences. The differences from $D'$ that are covered by the Part I stars 
are such that there are five sets of $2m+1$ differences. This leaves five sets of $13m+5$ differences, where the differences in each set are all equivalent to $j \pmod{5}$ for $j=1,2,3,4,5$. Therefore, these remaining differences have the property that they can be partitioned into the $(13m+5) \times 5$ subarray, $T_{0}^{2}$.

Now for $i=1,2,3,4,5$, we build the balanced star array for $V_{i}$ as follows. Beginning with $T_{i}^{1}$, for each prime star $p_{i}$, the differences covered gives the row $[d_1,d_2,d_3,d_4,d_5]$ where 
\begin{align*}
    d_{j}=&6l_{j}'+(i+j)-(6c+i)\pmod{6}\\
    \equiv& j\pmod{6}.
\end{align*}

This leaves a total of $65m+25$ differences which can be partitioned into the $(13m+5) \times 5$ subarray, $T_{i}^{2}$. 

\end{proof}

\begin{lemma} 
\label{2}
There is a balanced star array for each $V_{i}$, $i \in \mathbb{Z}_6$ when $v=180m+12$ with $m \geq 1$.
\end{lemma}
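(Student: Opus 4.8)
The plan is to follow the template of Lemmas~\ref{1}, \ref{5}, \ref{3}, and~\ref{0}: for each $i \in \Z_6$ I will build the subarray $T_i^1$ from the differences that the Part~I factors of the $t=2$ case (Lemma~\ref{Part I}) already cover on $V_i$, and then show that the leftover differences of $D'$ split into equally many elements in each residue class $1,2,3,4,5 \pmod 6$, so that they pack into the full rows of a subarray $T_i^2$. The content is thus entirely a class-by-class counting argument.

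First I would record the arithmetic for $v = 180m+12$. Here $\frac{v-2}{2} = 90m+5$, so $D=\{0,1,\dots,90m+5\}$ has exactly $15m+1$ differences in each residue class modulo $6$; deleting the multiples of $6$ leaves $|D'| = 75m+5 \equiv 0 \pmod 5$, i.e. $15m+1$ differences in each of the five classes $j \equiv 1,2,3,4,5 \pmod 6$. Since $v = 6(30m+2)$ is built from the almost $5$-star factor $F$ of Lemma~\ref{t2}, which has \emph{no} mixed star, I expect no partial row: every $T_i$ should consist of full rows, exactly as in the $t=0$ case. Note $\lceil \frac{v-2}{12}\rceil = 15m+1$, so each array should end up with $15m+1$ rows.

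Next I would count the Part~I coverage. Each of the $2m$ prime stars of $F$ lifts to six prime stars whose centers run through $V_0,\dots,V_5$, and the lift landing in $V_i$ contributes one difference in each class $1,\dots,5$ to $T_i^1$; hence the prime stars account for $2m$ differences per class in every $T_i$. The edge $L$ of $F$ (of difference $(15m)'$) is handled by the two extra stars $s^\star_1,s^\star_2$: $s^\star_1$ is centered in $V_0$ and $s^\star_2$ in $V_1$, and each should cover one further difference in each of the five classes. Concretely, $s^\star_1$ picks up the top differences $90m+1,\dots,90m+5$ (one per class, since $x_2-x_1=15m$), and $s^\star_2$ picks up $1,2,3,4$ together with the class-$5$ difference $90m-1$. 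Consequently $T_0^1$ and $T_1^1$ hold $2m+1$ differences per class, while $T_2^1,\dots,T_5^1$ hold $2m$ per class, and in every case these differences are distinct by the ``distinct forward differences'' property of $F$.

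Subtracting from the $15m+1$ differences available in each class gives the remaining counts: $13m$ per class for $V_0$ and $V_1$, and $13m+1$ per class for $V_2,\dots,V_5$. Because these are constant across the five classes, the leftover differences partition into full rows $[d_1,d_2,d_3,d_4,d_5]$ with $d_j \equiv j \pmod 6$, producing the $(13m)\times 5$ subarrays $T_0^2,T_1^2$ and the $(13m+1)\times 5$ subarrays $T_2^2,\dots,T_5^2$, each giving a total of $15m+1$ rows as required. I expect the main obstacle to be precisely the asymmetry introduced by $L$: since $s^\star_1$ and $s^\star_2$ sit only over $V_0$ and $V_1$, I must verify that each of them meets all five residue classes, so that the per-class balance \emph{inside} every individual $V_i$ is preserved. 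If one of these stars were to miss a class (leaving it short while loading a multiple of $6$ elsewhere), the corresponding $T_i^2$ could no longer be filled with complete rows, so checking that the two $L$-stars are themselves class-balanced is the crux of the proof.
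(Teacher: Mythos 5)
Your proposal is correct and follows the same template as the paper's own proof of Lemma~\ref{2}: form $T_i^1$ from the differences covered on $V_i$ by the lifted prime stars and the extra stars of the $t=2$ case of Lemma~\ref{Part I}, check that the covered differences are balanced across the residue classes $1,\dots,5 \pmod 6$, and pack the leftover differences of $D'$ into full rows of $T_i^2$. Where your numbers disagree with the paper's, yours are the correct ones: the paper credits only $2m$ covered differences per class to every $V_i$ and so claims a $(13m+1)\times 5$ subarray $T_i^2$ for all six arrays, but this overlooks the rows contributed by $s^{\star}_1$ (the row $[90m+1,\dots,90m+5]$ in $T_0^1$) and by $s^{\star}_2$ (the row $[1,2,3,4,90m-1]$ in $T_1^1$); your tally ($2m+1$ per class for $V_0,V_1$, hence $13m\times 5$ subarrays $T_0^2,T_1^2$, and $2m$ per class for $V_2,\dots,V_5$, hence $(13m+1)\times 5$ subarrays) is the one for which every array has the required $15m+1$ rows, and it eliminates the paper's implicit double-counting of the ten differences carried by $s^{\star}_1,s^{\star}_2$. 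You also read the last leaf of $s^{\star}_2$ as $6x_2$ rather than the paper's $6x_2+1$, which is clearly the intended construction (otherwise $6x_2$ is left uncovered and that edge has difference $\equiv 0 \pmod 6$), so your identification of the two extra stars as class-balanced --- the crux, as you say --- goes through.
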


\begin{proof}
We first build the array for $V_{0}$. We build the rows in $T_{0}^{1}$ based on the stars that are given in Lemma~\ref{Part I}, case $t=2$. Let one row be $[1,2,3,4,5]$, corresponding to 
the forward differences on $V_{0}$ from the star $s^{\star}$. Each prime star $p$, produces the row $[d_1,d_2,d_3,d_4,d_5]$, where $d_j=6l_{j}'+j-6c$ for $j=1,2,3,4,5$. Note 
that $d_{j} \equiv j \pmod{6}$. Here we have $|D'|=75m+5 \equiv 0 \pmod{5}$, it follows that there are five sets of $15m+1$ differences. The differences from $D'$ that are covered by the Part I stars 
are such that there are five sets of $2m$ differences. This leaves five sets of $13m+1$ differences, where the differences in each set are all equivalent to $j \pmod{5}$ for $j=1,2,3,4,5$. Therefore, these remaining differences have the property that they can be partitioned into the $(13m+1) \times 5$ subarray, $T_{0}^{2}$.

Now for $i=1,2,3,4,5$, we build the balanced star array for $V_{i}$ as follows. Beginning with $T_{i}^{1}$, for each prime star $p_{i}$, the differences covered gives the row $[d_1,d_2,d_3,d_4,d_5]$ where 
\begin{align*}
    d_{j}=&6l_{j}'+(i+j)-(6c+i)\pmod{6}\\
    \equiv& j\pmod{6}.
\end{align*}

This leaves a total of $65m+5$ differences which can be partitioned into the $(13m+1) \times 5$ subarray, $T_{i}^{2}$. 

\end{proof}

\begin{lemma} 
\label{4}
There is a balanced star array for each $V_{i}$, $i \in \mathbb{Z}_6$ when $v=180m+132$ with $m \geq 1$.
\end{lemma}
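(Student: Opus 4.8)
The plan is to follow Lemmas~\ref{0}, \ref{2}, and especially \ref{3}, constructing six balanced star arrays $T_0,\dots,T_5$ out of the Part~I factors of Case $t=4$ of Lemma~\ref{Part I}. With $v=180m+132$ we have $D=\{0,1,\dots,90m+65\}$, and since $90m+66=6(15m+11)$ each residue class mod $6$ meets $D$ in exactly $15m+11$ elements; deleting the class $\equiv 0\pmod 6$ leaves $|D'|=5(15m+11)=75m+55\equiv 0\pmod 5$. Thus every $T_i$ is a full $(15m+11)\times 5$ array partitioning $D'$, with no partial row --- consistent with Case $t=4$ of Lemma~\ref{t4} containing no mixed star. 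As usual $T_i^1$ records the differences covered by the Part~I stars and $T_i^2$ is its complement, supplied by Lemma~\ref{Part II}.

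First I would list the rows of $T_i^1$. The pure stars contribute only differences $\equiv 0\pmod 6$, which lie outside $D'$ and are ignored. Each of the $2m+1$ prime stars $p$ of $F$ blows up into one star per class, contributing to $T_i^1$ a full row $[d_1,\dots,d_5]$ with $d_j\equiv j\pmod 6$; this gives $2m+1$ rows in every $T_i^1$. The little prime star $L=\{x_1;x_2,x_3,x_4\}$ produces the four $5$-stars $s^\star_1,\dots,s^\star_4$, with centres in $V_0,V_1,V_2,V_3$ respectively, so that each of $T_0^1,\dots,T_3^1$ gains one additional row $[s_1-s_0,\dots,s_5-s_0]$ from $s^\star_{i+1}$, while $T_4^1$ and $T_5^1$ gain none.

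This forces two cases. For $i\in\{0,1,2,3\}$, $T_i^1$ has $2m+2$ full rows, so its complement has five columns of $(15m+11)-(2m+2)=13m+9$ entries, giving $T_i^2$ of size $(13m+9)\times 5$. For $i\in\{4,5\}$, $T_i^1$ has only the $2m+1$ prime-star rows, leaving $13m+10$ entries per column for a $(13m+10)\times 5$ subarray $T_i^2$. In both cases the five column totals are equal, so the complement is genuinely rectangular and a balanced star array for each $V_i$ exists; Lemma~\ref{Part II} then yields the decomposition of $K_v-I$.

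The delicate step, which I expect to be the main obstacle, is confirming that the entries of $T_i^1$ are pairwise distinct in each column, so that the complement really has these uniform column sizes. Two verifications are required. First, each $s^\star_{i+1}$ must yield a genuinely full row, i.e. none of its five differences may be $\equiv 0\pmod 6$; a direct computation of the differences of $s^\star_1,\dots,s^\star_4$ confirms this (unlike Case $t=2$, where one $s^\star$-edge has difference $\equiv 0$). Second, for fixed $i$ and fixed column $j$ every prime star uses the \emph{same} branch $6\delta_j+j$ or $6\delta_j+j-6$ of the blow-up --- the choice depending only on whether $i+j\ge 6$ --- so injectivity of $\delta\mapsto 6\delta+c$ reduces distinctness of the prime entries to distinctness of the prime differences $\delta$, which Lemma~\ref{t4} provides. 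Finally the $s^\star_{i+1}$ entries must miss the prime entries: the differences of $L$ are the three largest prime differences $15m+8,15m+9,15m+10$ (for either parity of $m$), so its blown-up entries sit at the top of each column, above the bound $6(15m+6)+5$ imposed by the $P_3$ differences, the only exceptions being the entries of difference $1$ and $2$ in $s^\star_4$, which lie below every prime entry in their columns and are therefore also safe.
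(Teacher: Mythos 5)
Your proposal is correct, and it follows the same route as the paper: build each $T_i^1$ from the case-$t=4$ Part I stars and let $T_i^2$ be the complementary subarray of $D'$. The point worth flagging is that your bookkeeping disagrees with the paper's own proof, and yours is the accurate version. The paper asserts that for every $i$ the Part I stars cover five sets of $2m+1$ differences, so that each $T_i^2$ is a $(13m+10)\times 5$ array; this count ignores the four stars $s^{\star}_1,\dots,s^{\star}_4$ coming from the little star $L$, whose centres lie in $V_0,V_1,V_2,V_3$ and which add one further full row to each of $T_0^1,\dots,T_3^1$ (for $V_0$ the paper even exhibits that row as $[1,2,3,4,5]$, then fails to include it in the tally). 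Your two-case conclusion --- $T_i^2$ of size $(13m+9)\times 5$ for $i\in\{0,1,2,3\}$ and $(13m+10)\times 5$ for $i\in\{4,5\}$ --- is what the construction actually yields, and since each complement still has five equal column lengths, the lemma's conclusion stands. Your closing distinctness check (prime entries distinct because Lemma~\ref{t4} provides distinct prime differences and the branch $6\delta+j$ versus $6\delta+j-6$ depends only on $(i,j)$; the $L$-differences $15m+8$, $15m+9$, $15m+10$ exceed the $P_3$ bound $15m+6$ in either parity of $m$; and the entries $1,2$ of $s^{\star}_4$ lie below all prime entries in their columns) is also correct, and it is exactly the verification the paper leaves implicit.
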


\begin{proof}
We first build the array for $V_{0}$. We build the rows in $T_{0}^{1}$ based on the stars that are given in Lemma~\ref{Part I}, case $t=4$. Let one row be $[1,2,3,4,5]$, corresponding to 
the forward differences on $V_{0}$ from the star $s^{\star}$. Each prime star $p$, produces the row $[d_1,d_2,d_3,d_4,d_5]$, where $d_j=6l_{j}'+j-6c$ for $j=1,2,3,4,5$. Note 
that $d_{j} \equiv j \pmod{6}$. Here we have $|D'|=75m+55 \equiv 0 \pmod{5}$, it follows that there are five sets of $15m+11$ differences. The differences from $D'$ that are covered by the Part I stars 
are such that there are five sets of $2m+1$ differences. This leaves five sets of $13m+10$ differences, where the differences in each set are all equivalent to $j \pmod{5}$ for $j=1,2,3,4,5$. Therefore, these remaining differences have the property that they can be partitioned into the $(13m+10) \times 5$ subarray, $T_{0}^{2}$.

Now for $i=1,2,3,4,5$, we build the balanced star array for $V_{i}$ as follows. Beginning with $T_{i}^{1}$, for each prime star $p_{i}$, the differences covered gives the row $[d_1,d_2,d_3,d_4,d_5]$ where 
\begin{align*}
    d_{j}=&6l_{j}'+(i+j)-(6c+i)\pmod{6}\\
    \equiv& j\pmod{6}.
\end{align*}

This leaves a total of $65m+50$ differences which can be partitioned into the $(13m+10) \times 5$ subarray, $T_{i}^{2}$. 

\end{proof}

\section{Results}

We begin by giving some direct results.

\begin{lemma}
\label{t1v42}
Let $v=42$. There is a decomposition of $K_v-I$ into 5-star factors.
\end{lemma}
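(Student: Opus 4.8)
The plan is to treat $v=42$ as the degenerate $m=0$ instance of the family $v=180m+42$ that is handled in general by Lemmas~\ref{t1}, \ref{Part I} (Case $t=1$), \ref{1}, and \ref{Part II}. Since $42=6\cdot 7$ and $7=30\cdot 0+7$, the only route through the Part~I construction is from an almost $5$-star factor on $7$ vertices with a single isolated vertex (the $t=1$, $m=0$ case). The obstruction is that Lemma~\ref{t1} is stated for $m\ge 1$ and its formulas genuinely collapse at $m=0$: the mixed star $M=(15m+3;30m+6,30m+5,30m+4,(30m+3)',(30m+2)')$ degenerates into a self-loop at the center, and the prime differences it is meant to carry, $(15m)'$ and $(15m-1)'$, become $0'$ and $(-1)'$; equivalently the array formula $d_2=6(l_5'-c)+2$ returns the negative value $-4$. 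So the first, and main, step is to supply an admissible $m=0$ mixed star by hand.

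I would build it directly on $\{0,1,\dots,6\}$. Because $P_1$, $P_2$, $P_3$ are all empty at $m=0$, the whole almost $5$-star factor must consist of a single mixed star (three pure and two prime edges) plus one isolated vertex, and the three pure edges must realize the forward differences $1,2,3$ (which lift to the within-class differences $6,12,18$ that are excluded from $D'$). Placing the center in the middle so the two prime leaves sit \emph{above} it without a wrap-around, one admissible choice is $M=(3;2,1,0,4',6')$ with the vertex $5$ isolated: the pure edges $\{2,3\},\{1,3\},\{0,3\}$ cover differences $1,2,3$, and the two prime edges $\{3,4\}$, $\{3,6\}$ carry $1'$ and $3'$, so no forward difference occurs more than twice and there is no wrap-around.

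With this gadget the rest runs on the established machinery. Feeding $M$ together with the sextuple star $s^{\star}$ on the blown-up isolated vertex into Lemma~\ref{Part I}, Case $t=1$, yields $7$ five-star factors of $K_{42}-I$. I would then construct the six balanced star arrays by hand, mimicking Lemma~\ref{1} at $m=0$: the two prime leaves of $M$ lift to the differences $\{7,20\}$ for $V_0,V_1,V_2,V_3$, to $\{7,14\}$ for $V_4$, and to $\{1,14\}$ for $V_5$, and with the row $[1,2,3,4,5]$ contributed by $s^{\star}$ to $T_0^1$ these are exactly the Part~I differences. I would check that $T_0^2$ is a full $2\times 5$ array and each $T_i^2$ ($i=1,\dots,5$) a full $3\times 5$ array, so the six arrays partition $D'=\{1,\dots,20\}\setminus\{6,12,18\}$ with the correct column residues. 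Lemma~\ref{Part II} then converts the $2+5\cdot 3=17$ rows of the $T_i^2$ into $17$ further factors, giving $7+17=24=\tfrac{3(v-2)}{5}$ factors in total.

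The only real difficulty is this first step: at $m=0$ there are so few free differences that the two prime edges of the mixed star must be chosen to simultaneously (i) avoid wrap-around on the tiny $7$-vertex host, (ii) lift to values lying inside the range $\{1,\dots,20\}$ of $D'$ rather than out of range or negative, and (iii) leave each of the six residue buckets of $D'$ with precisely the cardinality needed to fill the $T_i^2$. Once an admissible mixed star such as the one above is fixed, everything else is a finite verification that the six arrays are complete, which I would settle by exhibiting the arrays explicitly.
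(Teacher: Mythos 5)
Your proposal is correct, and it takes a genuinely different route from the one the paper uses for $v=42$. The paper's proof of Lemma~\ref{t1v42} bypasses Lemma~\ref{t1} and Lemma~\ref{Part I} entirely: it writes down an explicit base $5$-star factor $F=\{s_0,\dots,s_6\}$ directly on $\{0,\dots,41\}$, realizing the cross-class differences by \emph{wrap-around} edges (e.g.\ $s_1=\{0;6,12,18,\overline{28},\overline{35}\}$ covering $\overline{14},\overline{7}$), records the covered difference/class pairs in the explicit arrays of Figure~\ref{v42pic}, and finishes with Lemma~\ref{Part II} --- the same finishing step you use. You instead repair the missing $m=0$ instance of Lemma~\ref{t1} with the hand-built gadget $M=(3;2,1,0,4',6')$ (vertex $5$ isolated) and then run the lifting of Lemma~\ref{Part I}, case $t=1$, verbatim. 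I checked your computations: the prime edges $\{3,4\}$ and $\{3,6\}$ lift to the difference pairs $\{7,20\}$ for $T_0,\dots,T_3$, $\{7,14\}$ for $T_4$, and $\{1,14\}$ for $T_5$; with the row $[1,2,3,4,5]$ from $s^{\star}$ every $T_i^2$ is then a full rectangle ($2\times5$ for $T_0$, $3\times5$ otherwise), giving $7+17=24$ factors as required. The one point you should make explicit is that, although the proof of Lemma~\ref{Part I} opens case $t=1$ with ``$v=30m+7$ where $m\geq1$,'' nothing in that construction uses $m\geq1$ beyond the existence of the gadget, which you supply; the structural properties it needs (no wrap-around, three pure edges realizing differences $1,2,3$, two prime edges with leaves above the center) are exactly what your $M$ has. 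As to what each route buys: the paper's is self-contained and reduces to checking a figure, but its $F$ lives outside the framework of the general lemmas because of the wrap-around edges, and indeed under the cyclic difference convention that Lemma~\ref{Part II} relies on, the figure's entry $T_4^1=[13,20]$ appears to be a slip ($F$ covers the differences $13$ and $14$ based in $V_4$, never $20$, so $14$ and $20$ should be exchanged between $T_4^1$ and $T_4^2$); your wrap-around-free construction keeps the general machinery applicable as stated and makes the array bookkeeping come out automatically, matching the $m=0$ specialization of the counting in Lemma~\ref{1}.
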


\begin{proof}
       Let $V=\{0,1,\dots,41 \}$ be the vertex set, and $V_i = \{v \in V | v \equiv i \pmod{6}\}$. Let $I = \{(i,i+21) : i \in \{0,1,2,\dots, 20 \} \} $ be the $1$-factor, and we give a decomposition of $K_v-I$ as follows. Let $F = \{s_0,s_1,s_2,s_3,s_4,s_5,s_6 \}$ be a $5$-star factor with stars $s_i$ where $i=0,1,\dots,6$. For each $s_i$, we let $l_{s_i}$ be the set of differences covered by $s_i$. If any edge is a wrap-around edge, then we denote its difference $l$ by $\bar l$.
\begin{align*}
    F : s_0 =& \{36; 37, 38, 39, 40, 41\}, l_{s_0} = \{ 1, 2, 3, 4, 5 \} \\
    s_1 =& \{0 ; 6, 12, 18, \overline{28}, \overline{35}\}, l_{s_1} = \{ 6, 12, 18, \overline{14}, \overline{7}\}  \\
    s_2 =& \{1 ; 7, 13, 19, \overline{29}, \overline{30}\}, l_{s_2} = \{ 6, 12, 18, \overline{13}, \overline{14}\}   \\
    s_3 =& \{2 ; 8, 14, 20, \overline{24}, \overline{31}\}, l_{s_3} = \{ 6, 12, 18, \overline{20}, \overline{13}\}   \\
    s_4 =& \{3 ; 9, 15, 21, \overline{25}, \overline{32}\}, l_{s_4} = \{ 6, 12, 18, \overline{20}, \overline{13}\}   \\
    s_5 =& \{4 ; 10, 16, 22, \overline{26}, \overline{33}\}, l_{s_5} = \{ 6, 12, 18, \overline{20}, \overline{13}\}   \\
    s_6 =& \{5 ; 11, 17, 23, \overline{27}, \overline{34}\}, l_{s_6} = \{ 6, 12, 18, \overline{20}, \overline{13}\}       
\end{align*}

We record the differences used in balanced star arrays, which are given in Figure~\ref{v42pic}. Let $T_i$ denote the balanced star array for $V_i$, $i\in \mathbb{Z}_6$. Then, by Lemma~\ref{Part II}, there is a decomposition of $K_{42}-I$ into 5-star factors.

\begin{figure}[!htb]

    \begin{minipage}{.5\linewidth}
        \centering  
        \begin{tabular}{|c|ccccc|}
        \hline
         &  &  & $T_0$ &  & \\
         \hline
        $T_0^1$ & 1 & 2 & 3 & 4 & 5 \\
         & 13 & 20 & * & * & * \\
         \hline
        $T_0^2$ & 7 & 8 & 9 & 10 & 11 \\
         & 19 & 14 & 15 & 16 & 17\\
         &  &  &  &  & \\
        \hline
       \end{tabular}
    \end{minipage}
    \begin{minipage}{.5\linewidth}
    \begin{tabular}{|c|ccccc|}
        \hline
         &  &  & $T_1$ &  & \\
         \hline
        $T_1^1$ & 13 & 20 & * & * & * \\
         &  &  &  &  &  \\
         \hline
        $T_1^2$ & 1 & 2 & 3 & 4 & 5 \\
         & 7 & 8 & 9 & 10 & 11\\
         & 19 & 14 & 15 & 16 & 17 \\
        \hline
    \end{tabular}
    \end{minipage}

\begin{align*}
\end{align*}

    \begin{minipage}{.5\linewidth}
        \centering
        \begin{tabular}{|c|ccccc|}
        \hline
         &  &  & $T_2$ &  & \\
         \hline
        $T_2^1$ & 13 & 20 & * & * & * \\
         &  &  &  &  &  \\
         \hline
        $T_2^2$ & 1 & 2 & 3 & 4 & 5 \\
         & 7 & 8 & 9 & 10 & 11\\
         & 19 & 14 & 15 & 16 & 17 \\
        \hline
       \end{tabular}
    \end{minipage}
    \begin{minipage}{.5\linewidth}
    \begin{tabular}{|c|ccccc|}
        \hline
         &  &  & $T_3$ &  & \\
         \hline
        $T_3^1$ & 13 & 20 & * & * & * \\
         &  &  &  &  &  \\
         \hline
        $T_3^2$ & 1 & 2 & 3 & 4 & 5 \\
         & 7 & 8 & 9 & 10 & 11\\
         & 19 & 14 & 15 & 16 & 17 \\
        \hline
    \end{tabular}
    \end{minipage}

\begin{align*}
\end{align*}

    \begin{minipage}{.5\linewidth}
        \centering
        \begin{tabular}{|c|ccccc|}
        \hline
         &  &  & $T_4$ &  & \\
         \hline
        $T_4^1$ & 13 & 20 & * & * & * \\
         &  &  &  &  &  \\
         \hline
        $T_4^2$ & 1 & 2 & 3 & 4 & 5 \\
         & 7 & 8 & 9 & 10 & 11\\
         & 19 & 14 & 15 & 16 & 17 \\
        \hline
       \end{tabular}
    \end{minipage}
    \begin{minipage}{.5\linewidth}
    \begin{tabular}{|c|ccccc|}
        \hline
         &  &  & $T_5$ &  & \\
         \hline
        $T_5^1$ & 7 & 14 & * & * & * \\
         &  &  &  &  &  \\
         \hline
        $T_5^2$ & 1 & 2 & 3 & 4 & 5 \\
         & 13 & 8 & 9 & 10 & 11\\
         & 19 & 20 & 15 & 16 & 17 \\
        \hline
    \end{tabular}
    \end{minipage}

    \caption{Balanced star array for $v=42$}
    \label{v42pic}
\end{figure}

\end{proof}

\begin{lemma}
\label{t5v102}
Let $v=102$. There is a decomposition of $K_v-I$ into 5-star factors.
\end{lemma}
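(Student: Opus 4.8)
The plan is to dispose of $v=102$ by an explicit, ad hoc construction, exactly as $v=42$ was handled in Lemma~\ref{t1v42}. A separate argument is forced here for a structural reason: $102=6\cdot 17$ and $17=30\cdot 0+17$, so $v=102$ is the $t=5$, $m=0$ instance of the general pipeline, and both the almost-factor Lemma~\ref{t5} and the array Lemma~\ref{5} are stated only for $m\ge 1$. Since the uniform machinery does not reach this base case, I will supply the missing Part~I factor and the six balanced star arrays by hand and then quote Lemma~\ref{Part II}.

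First I would fix $V=\{0,1,\dots,101\}$ with $V_i=\{u\in V:u\equiv i\pmod 6\}$, each of size $17$, and take the $1$-factor to be the diameter matching $I=\{(i,i+51):0\le i\le 50\}$, so that $K_{102}-I$ carries exactly the differences $1,\dots,50$. Here $D=\{0,1,\dots,50\}$ and $D'=D\setminus\{0,6,12,\dots,48\}$, which splits by residue mod $6$ into two classes of size $9$ (residues $1,2$) and three of size $8$ (residues $3,4,5$); hence each balanced star array will be a $9\times 6$ array whose single mixed-star row is missing its last three cells.

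Next I would write down an explicit $5$-star factor $F$ on the $102$ vertices, the analogue of the factor $F$ in the $v=42$ proof, as a list of $17$ stars that partition $V$, cover every difference $d\equiv 0\pmod 6$ in $\{6,12,\dots,48\}$ exactly once, and otherwise carry only wrap-around (prime) edges. I would record the differences $F$ contributes into the subarrays $T_i^1$ and place the remaining differences of $D'$ into $T_i^2$ for each $i\in\mathbb{Z}_6$, displaying all six arrays in a figure modelled on Figure~\ref{v42pic}. Verifying the balanced-star-array axioms is then direct: each column $j$ holds only differences $\equiv j\pmod 6$, the entries partition $D'$, $T_i^2$ has no empty cells, and exactly one row of each $T_i^1$ is incomplete, its two filled entries recording the prime edges of the mixed star. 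Lemma~\ref{Part II} then develops each row of every $T_i^2$ into a $5$-star factor and assembles the decomposition of $K_{102}-I$.

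The main obstacle is the explicit construction itself: arranging $F$ so that its $17$ stars partition all $102$ vertices and cover each multiple-of-$6$ difference exactly once, while simultaneously guaranteeing that the residual differences in each residue class pack into valid rows of five entries with the prescribed congruences and no repetition. This is a finite but delicate bookkeeping task, heavier than the $v=42$ case since there are now $50$ differences and $17$ stars per factor in place of $20$ and $7$; but once the six arrays are exhibited, the verification is routine and Lemma~\ref{Part II} supplies all remaining factors automatically.
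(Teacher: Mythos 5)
Your framework coincides with the paper's: $v=102$ is indeed the $t=5$, $m=0$ case excluded from Lemma~\ref{t5} and Lemma~\ref{5}, and the paper handles it exactly as you propose, by a direct construction in the style of Lemma~\ref{t1v42} that is then fed into Lemma~\ref{Part II}. Your setup is also correct (the matching $I=\{(i,i+51)\}$, the classes $V_i$, $|D'|=42$ split as $9,9,8,8,8$ over the residues $1,\dots,5$, nine-row arrays with one partial row). But the proposal never produces the one thing the proof actually consists of: the explicit factor and the six arrays. The paper's proof \emph{is} this data --- seventeen stars listed as $F_1\cup F_2\cup F_3$ together with the arrays of Figure~\ref{v102pic} --- and for an existence statement settled by explicit construction, deferring the construction as ``a finite but delicate bookkeeping task'' leaves the proof empty. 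What you have written is a plan for a proof, not a proof.

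Moreover, the specification you impose on $F$ is not the right one, so executing your plan as stated would fail. You ask that $F$ cover each difference $d\equiv 0\pmod 6$ ``exactly once''; in fact $F$ must contain exactly one edge of each such difference \emph{for each residue class} of its endpoints (so $6\cdot 8=48$ pure edges in all), because the seventeen Part~I factors are the translates of $F$ by multiples of $6$ modulo $102$, and such translates never change the residue class of an edge. With a single edge per multiple-of-six difference, the translates would cover only $17$ of the $102$ edges of that difference, and Lemma~\ref{Part II} cannot repair this, since $D'$ excludes the multiples of $6$ by definition. Similarly, identifying ``prime'' with ``wrap-around'' is wrong as a requirement: in the paper's factor the five stars of $F_3$ consist of \emph{forward} prime edges (differences $7',\dots,11'$, etc.), and it is precisely these stars that furnish the full rows of $T_i^1$ for $i=0,\dots,4$. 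Finally, the prime edges must be distributed so that each class $V_i$ receives exactly one more prime difference congruent to $1$ and to $2 \pmod 6$ than to each of $3,4,5$, or else the leftover differences cannot be packed into the full rows of $T_i^2$; your specification is silent on this, so even a successful run of your ``bookkeeping'' could produce a factor that does not yield balanced star arrays at all.
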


\begin{proof}
        Let $V=\{0,1,\dots,101 \}$, $V_i = \{v \in V | v \equiv i \pmod{6}\}$ with $i \in \mathbb{Z}_6$, and let $I = \{(i,i+51) : i \in \{0,1,2,\dots, 50 \} \}$ be the $1$-factor. We give $F = F_1 \cup F_2 \cup F_3$, which is a $5$-star factor with $17$ $5$-stars. For each star $s_i$, let $l_{s_i}$ be the set of differences covered by $s_i$. \\

        If any edge is a wrap-around edge, we denote its difference $l$ as $\bar l$. We give $F_1=\{s_1,s_2,s_3,s_4,s_5,s_6 \}$ as follow.
\begin{align*}
    s_1 =& \{0 ; 6, 12, 18, 24, 30\}, l_{s_1} = \{ 6, 12, 18, 24, 30\}  \\
    s_2 =& \{1 ; 7, 13, 19, 25, 31\}, l_{s_2} = \{ 6, 12, 18, 24, 30\}  \\
    s_3 =& \{2 ; 8, 14, 20, 26, 32\}, l_{s_3} = \{ 6, 12, 18, 24, 30\}  \\
    s_4 =& \{3 ; 9, 15, 21, 27, 33\}, l_{s_4} = \{ 6, 12, 18, 24, 30\}  \\
    s_5 =& \{4 ; 10, 16, 22, 28, 34\}, l_{s_5} = \{ 6, 12, 18, 24, 30\}  \\
    s_6 =& \{5 ; 11, 17, 23, 29, 35\}, l_{s_6} = \{ 6, 12, 18, 24, 30\}   
\end{align*}

We give $F_2=\{s_1,s_2,s_3,s_4,s_5,s_6 \}$ as follows.
\begin{align*}
    s_1 =& \{36 ; 72, 78, 84, \overline{94}', \overline{101}'\}, l_{s_1} = \{ 36, 42, 48,  \overline{37}',  \overline{44}'\}  \\
    s_2 =& \{37 ; 73, 79, 85, \overline{95}', \overline{96}'\}, l_{s_2} = \{ 36, 42, 48,  \overline{43}',  \overline{44}'\}  \\
    s_3 =& \{38 ; 74, 80, 86, \overline{90}', \overline{97}'\}, l_{s_3} = \{ 36, 42, 48,  \overline{43}',  \overline{50}'\}  \\
    s_4 =& \{39 ; 75, 81, 87, \overline{91}', \overline{98}'\}, l_{s_4} = \{ 36, 42, 48,  \overline{43}',  \overline{50}'\}  \\
    s_5 =& \{40 ; 76, 82, 88, \overline{92}', \overline{99}'\}, l_{s_5} = \{ 36, 42, 48,  \overline{43}',  \overline{50}'\}  \\
    s_6 =& \{41 ; 77, 83, 89, \overline{93}', \overline{100}'\}, l_{s_6} = \{ 36, 42, 48,  \overline{43}',  \overline{50}'\}  
\end{align*}

We give $F_3=\{s_1,s_2,s_3,s_4,s_5 \}$ as follows.
\begin{align*}
    s_1 =& \{42 ; 49', 50', 51', 52', 53'\}, l_{s_1} = \{  7', 8', 9', 10', 11'  \}  \\
    s_2 =& \{43 ; 54', 56', 57', 58', 59'\}, l_{s_2} = \{  11', 13', 14', 15', 16' \}  \\
    s_3 =& \{44 ; 60', 61', 63', 64', 65'\}, l_{s_3} = \{  16', 17', 19', 20', 21' \}  \\
    s_4 =& \{45 ; 66', 67', 68', 70', 71'\}, l_{s_4} = \{  21', 22', 23', 25', 26' \}  \\
    s_5 =& \{46 ; 47', 48', 55', 62', 69'\}, l_{s_5} = \{  1', 2', 9', 16', 23' \}    
\end{align*}

We record the differences covered by $F$ in balanced star arrays, which are given in Figure~\ref{v102pic}. Let $T_i$ denote the balanced star array for $V_i$, $i\in \mathbb{Z}_6$. Then, by Lemma~\ref{Part II}, there is a decomposition of $K_{102}-I$ into $5$-star factors.

\begin{figure}[!htb]

    \begin{minipage}{.5\linewidth}
        \centering  
        \begin{tabular}{|c|ccccc|}
        \hline
         &  &  & $T_0$ &  & \\
         \hline
        $T_0^1$ & 7 & 8 & 9 & 10 & 11 \\
         & 43 & 50 & * & * & * \\
         \hline
        $T_0^2$ & 1 & 2 & 3 & 4 & 5 \\
         & 13 & 14 & 15 & 16 & 17\\
         & 19 & 20 & 21 & 22 & 23\\
         & 25 & 26 & 27 & 28 & 29 \\
         & 31 & 32 & 33 & 34 & 35 \\
         & 37 & 38 & 39 & 40 & 41 \\
         & 49 & 44 & 45 & 46 & 47 \\
         \hline
       \end{tabular}
    \end{minipage}
    \begin{minipage}{.5\linewidth}
    \begin{tabular}{|c|ccccc|}
        \hline
         &  &  & $T_1$ &  & \\
         \hline
        $T_1^1$ & 13 & 14 & 15 & 16 & 11 \\
         & 43 & 50 & * & * & * \\
         \hline
        $T_1^2$ & 1 & 2 & 3 & 4 & 5 \\
         & 7 & 8 & 9 & 10 & 17\\
         & 19 & 20 & 21 & 22 & 23\\
         & 25 & 26 & 27 & 28 & 29 \\
         & 31 & 32 & 33 & 34 & 35 \\
         & 37 & 38 & 39 & 40 & 41 \\
         & 49 & 44 & 45 & 46 & 47 \\
        \hline
    \end{tabular}
    \end{minipage}

\begin{align*}
\end{align*}

    \begin{minipage}{.5\linewidth}
        \centering  
        \begin{tabular}{|c|ccccc|}
        \hline
         &  &  & $T_2$ &  & \\
         \hline
        $T_2^1$ & 19 & 20 & 21 & 16 & 17 \\
         & 43 & 50 & * & * & * \\
         \hline
        $T_2^2$ & 1 & 2 & 3 & 4 & 5 \\
         & 7 & 8 & 9 & 10 & 11\\
         & 13 & 14 & 15 & 22 & 23\\
         & 25 & 26 & 27 & 28 & 29 \\
         & 31 & 32 & 33 & 34 & 35 \\
         & 37 & 38 & 39 & 40 & 41 \\
         & 49 & 44 & 45 & 46 & 47 \\
         \hline
       \end{tabular}
    \end{minipage}
    \begin{minipage}{.5\linewidth}
    \begin{tabular}{|c|ccccc|}
        \hline
         &  &  & $T_3$ &  & \\
         \hline
        $T_3^1$ & 25 & 26 & 21 & 22 & 23 \\
         & 43 & 50 & * & * & * \\
         \hline
        $T_3^2$ & 1 & 2 & 3 & 4 & 5 \\
         & 7 & 8 & 9 & 10 & 11\\
         & 13 & 14 & 15 & 16 & 17\\
         & 19 & 20 & 27 & 28 & 29 \\
         & 31 & 32 & 33 & 34 & 35 \\
         & 37 & 38 & 39 & 40 & 41 \\
         & 49 & 44 & 45 & 46 & 47 \\
        \hline
    \end{tabular}
    \end{minipage}

\begin{align*}
\end{align*}

    \begin{minipage}{.5\linewidth}
        \centering  
        \begin{tabular}{|c|ccccc|}
        \hline
         &  &  & $T_4$ &  & \\
         \hline
        $T_4^1$ & 1 & 2 & 9 & 16 & 23 \\
         & 43 & 44 & * & * & * \\
         \hline
        $T_4^2$ & 7 & 8 & 3 & 4 & 5 \\
         & 13 & 14 & 15 & 10 & 11\\
         & 19 & 20 & 21 & 22 & 17\\
         & 25 & 26 & 27 & 28 & 29 \\
         & 31 & 32 & 33 & 34 & 35 \\
         & 37 & 38 & 39 & 40 & 41 \\
         & 49 & 50 & 45 & 46 & 47 \\
         \hline
       \end{tabular}
    \end{minipage}
    \begin{minipage}{.5\linewidth}
    \begin{tabular}{|c|ccccc|}
        \hline
         &  &  & $T_5$ &  & \\
         \hline
        $T_5^1$ & 37 & 44 & * & * & * \\
         &  & &  &  &  \\
         \hline
        $T_5^2$ & 1 & 2 & 3 & 4 & 5 \\
         & 7 & 8 & 9 & 10 & 11\\
         & 13 & 14 & 15 & 16 & 17\\
         & 19 & 20 & 21 & 22 & 23\\
         & 25 & 26 & 27 & 28 & 29 \\
         & 31 & 32 & 33 & 34 & 35 \\
         & 43 & 38 & 39 & 40 & 41 \\
         & 49 & 50 & 45 & 46 & 47 \\
        \hline
    \end{tabular}
    \end{minipage}

    \caption{Balanced star array for $v=102$}
    \label{v102pic}
\end{figure}

\end{proof}

\begin{lemma}
\label{t2v12}
Let $v=12$. There is a decomposition of $K_v-I$ into 5-star factors.
\end{lemma}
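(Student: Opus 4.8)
The plan is to dispose of $v=12$ by a direct construction, in the same spirit as Lemmas~\ref{t1v42} and~\ref{t5v102}, because $v=12$ is exactly the $t=2$, $m=0$ base case that is excluded from the general machinery (Lemma~\ref{t2} and Lemma~\ref{2} both require $m\ge 1$). Writing $V=\{0,1,\dots,11\}$ and taking $I=\{\{i,i+6\}:i=0,\dots,5\}$ to be the difference-$6$ one-factor (mirroring the choice of $I$ in the two preceding lemmas), the graph $K_{12}-I$ consists of precisely the edges of differences $1,2,3,4,5$. It therefore has $5\cdot 12=60$ edges and must be split into six $5$-star factors, each of which is a pair of vertex-disjoint copies of $K_{1,5}$. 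A degree count ($4c+6=10$ at each vertex, where $c$ is the number of factors in which the vertex is a center) forces every vertex to serve as a star center exactly once, which points toward a fully rotational construction.

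First I would exhibit the six factors explicitly. For $i=0,1,\dots,5$, set
\[
F_i=\big\{(i;\,i+1,i+2,i+3,i+4,i+5),\ (i+6;\,i+7,i+8,i+9,i+10,i+11)\big\},
\]
with all entries read modulo $12$. Each $F_i$ is a genuine $5$-star factor: its two stars are centered at $i$ and $i+6$, their leaf sets $\{i+1,\dots,i+5\}$ and $\{i+7,\dots,i+11\}$ are disjoint, and the six vertices of each star tile $\mathbb{Z}_{12}$. Equivalently, I can phrase this through the balanced star array framework: take the array $T_i$ for each $V_i$ to be the single row $[1,2,3,4,5]$ placed entirely in $T_i^2$ (with $T_i^1$ empty, since here there are no {\em Part I} factors); note that $\tfrac{v-6}{6}=1$, so in Lemma~\ref{Part II} each row yields exactly the base star together with one translate by $6$; and invoke Lemma~\ref{Part II} to obtain precisely the $F_i$ above.

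The verification is then a short counting check rather than a genuine obstacle. Across the six factors the centers are $0,1,\dots,11$, each occurring once, and a star centered at $c$ covers exactly one edge of each difference $d\in\{1,2,3,4,5\}$, namely $\{c,c+d\}$. Hence for every fixed $d$ the twelve edges $\{c,c+d\}$, $c\in\mathbb{Z}_{12}$, are each covered exactly once, wrap-around edges included, so the $F_i$ partition the edge set of $K_{12}-I$. The only point demanding any care is confirming that Lemma~\ref{Part II} applies with an empty $T_i^1$ (equivalently, that the direct construction leaves no difference either double-covered or omitted); since $v=12$ is so small this is immediate, and if a uniform presentation is preferred one can simply record the $F_i$ together with their balanced star arrays in a figure, exactly as was done for $v=42$ and $v=102$.
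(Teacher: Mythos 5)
Your proposal is correct and is essentially identical to the paper's own proof: the paper takes the same one-factor $I=\{\{i,i+6\}:i=0,\dots,5\}$ and exhibits exactly the same six factors $F_i=\{(i;i+1,\dots,i+5),(i+6;i+7,\dots,i+11)\}$ read modulo $12$. The difference-counting verification you add (each vertex serving as center exactly once, so each difference $d\in\{1,\dots,5\}$ is covered exactly once at each starting vertex) is a sound justification that the paper leaves implicit.
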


\begin{proof}
       Let $V=\{0,1,\dots,11 \}$ be the vertex set, and let $I = \{ \{0,6\} , \{1,7\} , \{2,8\} ,$
       $ \{3,9\} , \{4,10\} , \{5,11\} \}$. 
\begin{align*}
    \textrm{Let } F_1 =& \{ \{0; 1, 2, 3, 4, 5\}, \{6; 7, 8, 9, 10, 11\}  \} \\
    F_2 =& \{ \{1; 2, 3, 4, 5, 6\}, \{7; 8, 9, 10, 11, 0\}  \}  \\
    F_3 =& \{ \{2; 3, 4, 5, 6, 7\}, \{8; 9, 10, 11, 0, 1\}  \}  \\
    F_4 =& \{ \{3; 4, 5, 6, 7, 8\}, \{9; 10, 11, 0, 1, 2\}  \}  \\
    F_5 =& \{ \{4; 5, 6, 7, 8, 9\}, \{10; 11, 0, 1, 2, 3\}  \}  \\
    F_6 =& \{ \{5; 6, 7, 8, 9, 10\}, \{11; 0, 1, 2, 3, 4\}  \}      
\end{align*}

    Then, $F= \cup_{i=1}^6F_i$ gives the desired decomposition.

\end{proof}

We are now in a position to prove the main theorem.

\begin{thm} There exists a decomposition of $K_v-I$ into $5$-star factors if and only if $v \equiv 12 \pmod{30}$.
\end{thm}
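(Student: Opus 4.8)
The plan is to prove the two directions of the equivalence separately, with all the real work lying in sufficiency. Necessity is immediate: by Lemma~\ref{ness}, any decomposition of $K_v-I$ into $5$-star factors (equivalently, a $(K_2,K_{1,5})$-$URD(v;1,s)$) forces $v\equiv 12\pmod{30}$, so I would dispose of that implication in a single sentence and turn to sufficiency.

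For sufficiency, the first move is to refine $v\equiv 12\pmod{30}$ into the six residues modulo $180$, namely $v\equiv 12,42,72,102,132,162\pmod{180}$. This is the right granularity because Lemma~\ref{Part I} inflates an almost $5$-star factor on $v/6$ vertices into $v/6$ genuine $5$-star factors on $v$ vertices, and the six almost-factor lemmas land in exactly these classes, one per value of $t$: $t=2$ gives $180m+12$, $t=1$ gives $180m+42$, $t=0$ gives $180m+72$, $t=5$ gives $180m+102$, $t=4$ gives $180m+132$, and $t=3$ gives $180m+162$. For each class I would run one and the same two-stage argument. In \emph{Part I}, I apply the matching almost-factor lemma (one of Lemmas~\ref{t0}--\ref{t5}) and push it through Lemma~\ref{Part I}; the resulting factors cover every edge of difference $d\equiv 0\pmod{6}$ (these sit inside pure or mixed stars) together with a prescribed set of prime differences. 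In \emph{Part II}, I invoke the corresponding balanced star array lemma (Lemmas~\ref{2},~\ref{1},~\ref{0},~\ref{5},~\ref{4},~\ref{3}, matched to the six classes in the same order) and apply Lemma~\ref{Part II}, which turns each row of every $T_i^{2}$ into one more $5$-star factor and thereby exhausts all differences $d\not\equiv 0\pmod{6}$. Since the removed $1$-factor $I$ is exactly the set of edges of difference $v/2$, which is excluded from $D=\{0,1,\dots,\tfrac{v-2}{2}\}$ from the outset, the two stages together decompose precisely $K_v-I$.

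The step I expect to be the crux is the counting buried in the balanced star array lemmas, since this is what forces the two stages to mesh with no edge covered twice and none omitted. The prime stars of \emph{Part I}, together with the two prime edges of the single mixed star present in the odd-$t$ families, must consume exactly the right number of differences in each residue class modulo $6$, so that what remains is a perfect rectangular block tileable by $T_i^{2}$. Concretely, in the odd-$t$ families $|D'|\equiv 2\pmod{5}$, which splits the differences into three classes $\equiv 3,4,5\pmod{6}$ and two slightly larger classes $\equiv 1,2\pmod{6}$; the prime stars (roughly $2m$ of them) together with the mixed-star row (a partial row $[d_1,d_2,\phi,\phi,\phi]$) are calibrated to leave an integer number of full rows. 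In the even-$t$ families $|D'|\equiv 0\pmod{5}$, the five classes are balanced and no mixed star is needed. What guarantees that these counts close without collision is precisely the package proved in Lemmas~\ref{t0}--\ref{t5}: each forward difference in the relevant range occurs \emph{at least once and at most twice}, there are \emph{no wrap-around edges}, and the isolated vertices assemble into a little prime star of exactly the predicted size. Verifying that each of the six balanced star array lemmas genuinely partitions $D'$ is the substance of the argument, and it has already been discharged in those lemmas rather than in the theorem itself.

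Finally I would clear the base cases, namely the instances $m=0$ that the general machinery does not reach. For the three families whose almost-factor lemma is stated only for $m\geq 1$ (the cases $t=1,2,5$), the values $v=42$, $v=12$, and $v=102$ are supplied directly by the explicit decompositions of Lemmas~\ref{t1v42},~\ref{t2v12}, and~\ref{t5v102}. For the other three families ($t=0,3,4$) the almost-factor lemmas already admit $m=0$, so $v=72$, $v=162$, and $v=132$ are produced by the same two-stage construction at $m=0$, the array counting remaining valid there. Combining these finitely many base cases with the six infinite families completes sufficiency, and together with Lemma~\ref{ness} this proves the theorem.
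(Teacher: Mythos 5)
Your proposal follows the paper's own proof essentially step for step: necessity from Lemma~\ref{ness}, refinement of $v\equiv 12\pmod{30}$ into the six residues mod $180$, the two-stage Part~I/Part~II construction pairing each residue with its almost-factor lemma (Lemmas~\ref{t0}--\ref{t5}) and its balanced star array lemma (Lemmas~\ref{1}--\ref{4}), and the same treatment of the base cases $v=12,42,102$ via Lemmas~\ref{t2v12},~\ref{t1v42},~\ref{t5v102}. The matching of $t$-values to residue classes and of array lemmas to families is exactly as in the paper, so this is the same argument.
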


\begin{proof}
Lemma~\ref{ness} gives the necessary conditions. If $v \equiv 12 \pmod{30}$, then $v \equiv 12,42,72,102,132,$ or $162 \pmod{180}$. The decompositions of $K_v$ for $v=12,42,$ and $102$ are given in Lemma~\ref{t2v12}, Lemma~\ref{t1v42}, and Lemma~\ref{t5v102} respectively. Let $v=30m+12$, and let $I=\{ \{u,v\} : D\{u,v\}=\frac{v}{2} \}$. If $m \geq 0 $ for $v \equiv 72,132,$ or $162 \pmod{180}$ and if $m \geq 1$ for $v \equiv 12,42,$ or $102 \pmod{180}$, then by Lemmas~\ref{t1} $\sim$ Lemma~\ref{t4}, there exists an almost 5-star factor with $t$ isolated vertices on $G = \{0,1,2,\dots, \frac{v}{6}-1 \}$, where $t \equiv \frac{v}{6} \pmod{6}$. Therefore, by Lemma~\ref{Part I}, there exists $v$ $5$-star factors on $v$ vertices. By Lemmas~\ref{1} $\sim$ Lemma~\ref{4}, there is a balanced star array for each $V_i$, $i \in \mathbb{Z}_6$. Thus by by Lemma~\ref{Part II}, the remaining edges of $K_v-I$ can be decomposed into $5$-star factors.

\end{proof}

We believe that the technique of using balanced star arrays will be helpful when considering decompositions of $K_v-I$ into $n$-star factors for $n>5$.


\begin{thebibliography}{9}




\bibitem{A} B. Alspach, {\em The wonderful Walecki construction}, Bull. Inst. Combin. Appl. {\bf 52} (2008), 7--20.

\bibitem{CD} 
C. J. Colbourn and J. H. Dinitz (eds.), {\em  Handbook of Combinatorial Designs}, Second Edition, Chapman and Hall/CRC, Boca Raton, FL, 2007.


\bibitem{Lu}
E. Lucas, {\em  R\'{e}cr\'{e}ations math\'{e}matiques}, Vol. $2$, Gauthier-Villars, Paris, 1883.


\bibitem{SG}
 E. Schuster and G. Ge, {\em On uniformly resolvable designs with block sizes
$3$ and $4$ }, Design. Code. Cryptogr.  {\bf 57} (2010), 57--69.


\bibitem{CC}
F. Chen and H. Cao, {\em Uniformly resolvable decompositions of $K_v$ into $K_2$ and $K_{1,3}$ graphs}, Discrete Math. \textbf{339} (2016), 2056--2062.



\bibitem{LMT} 
G. Lo Faro, S. Milici, and A. Tripodi, {\em Uniformly resolvable decompositions of  into paths on two, three and four vertices}, Discrete Math. {\bf 338} (2015), 2212--2219.




\bibitem{WG}
 H. Wei and G. Ge, {\em Uniformly resolvable designs with block sizes
$3$ and $4$}, Discrete Math.  {\bf 339} (2016), 1069--1085.



\bibitem{DLD}
 J. H. Dinitz, A.C.H. Ling and P. Danziger, {\em Maximum Uniformly resolvable designs with  block sizes $2$ and $4$},
Discrete Math. {\bf 309} (2009), 4716--4721.


\bibitem{L} 
J. Liu,{\em The equipartite Oberwolfach problem with uniform tables}, J. Comb. Theory  A {\bf 101} (2003), 20--34.


\bibitem{ZDZ}
L. Zhu, B. Du, and X. B. Zhang, {\em A few more RBIBDs with $k = 5$ and $\lambda= 1$ }, Discrete Math. \textbf{97} (1991),
409--417.




\bibitem{GM} 
M. Gionfriddo and S. Milici, {\em On the existence of uniformly resolvable decompositions of\/ $K_v$ and\/ $K_v-I$ into paths and kites},
Discrete Math. {\bf 313} (2013), 2830--2834.

\bibitem{GM1} 
M. Gionfriddo and S. Milici, {\em Uniformly resolvable $\cH$-designs with $\cH$=$\{P_3, P_4\}$}, Australas. J. Combin. {\bf 60} (2014), 325--332.

\bibitem{GM2} 
M. Gionfriddo and  S. Milici, {\em Uniformly resolvable $\{K_2, P_k\}$-designs with $k$=$\{3,4\}$},  Contrib. Discret. Math.  {\bf 10} (2015), 126--133.

\bibitem{Y}
M. L. Yu, {\em On tree factorizations of $K_n$},  J. Graph Theory   {\bf 17} (1993),  713--725.












\bibitem{AB}
R. J. R. Abel, {\em Some new near resolvable BIBDs with k = 7 and resolvable BIBDs with k = 5}, Australas.
J. Combin. 37,(2007), 141--146.

\bibitem{AG}
 R. J. R. Abel and M. Greig, {\em Some new $(v, 5, 1)$ RBIBDs and PBDs with block sizes $\equiv 1 \pmod{5}$},
Australas. J. Combin. \textbf{15} (1997), 177--202.

\bibitem{AGGZ}
R. J. R. Abel, G. Ge, M. Greig, and L. Zhu, {\em Resolvable BIBDs with a block size of $5$}, J. Stat. Plann.
Infer. \textbf{95} (2001), 49--65.


\bibitem{LA} 
R. Laskar  and  B. Auerbach,
{\em On decomposition of r-partite graphs into edge-disjoint Hamilton circuits}, Discrete Math. {\bf 14} (1976), 265--268.

\bibitem{R} 
R. Rees, {\em Uniformly resolvable pairwise balanced
designs with block sizes two and three}, J. Comb. Theory  A
{\bf 45} (1987), 207--225.

\bibitem{RS}  
R. Rees and D. R. Stinson.
{\em On resolvable group divisible designs with block size $3$, Ars
Combinatoria} {\bf 23} (1987), 107--120.








\bibitem{FMY}
S. C. Furino, Y. Miao, and J. X. Yin, {\em Frames and Resolvable Designs}, CRC Press, Boca Raton FL,
1996.



\bibitem{KLMT} 
S. Kucukcifci, G. Lo Faro, S. Milici, and A. Tripodi,
{\em Resolvable\/ $3$-star designs}, Discrete Math. {\bf 338} (2015), 608--614.


\bibitem{KMT} 
S. Kucukcifci, S. Milici and Zs. Tuza,
{\em Maximum uniformly resolvable decompositions of\/ $K_v$ into\/ $3$-stars and\/ $3$-cycles},
 Discrete Math., in press doi:10.1016/j.disc.2014.05.016 .








\bibitem{M} 
S. Milici, {\em A note on uniformly resolvable decompositions of\/ $K_v$ and\/ $K_v-I$ into\/ $2$-stars and\/ $4$-cycles},  Austalas. J. Combin. {\bf 56} (2013), 195--200.



\bibitem{MT} 
S. Milici and Zs. Tuza,
 {\em Uniformly resolvable decompositions of\/ $K_v$ into\/ $P_3$ and\/ $K_3$ graphs}, Discrete Math.
{\bf 331} (2014), 137--141.




\end{thebibliography}
\end{document}